\documentclass{amsart}

\usepackage{amsmath}
\usepackage{amsfonts}
\usepackage{amstext}
\usepackage{amsbsy}
\usepackage{amsopn}
\usepackage{amsxtra}
\usepackage{upref}
\usepackage{amsthm}
\usepackage{amsmath}
\usepackage{amssymb}

\newtheorem{prop}{Proposition}[section]
\newtheorem{rem}{Remark}[section]
\newtheorem{lema}{Lemma}[section]
\newtheorem{defi}{Definition}[section]
\newtheorem{teo}{Theorem}[section]                                             
\newtheorem{eje}{Example}[section]

\title[Multifractal analysis for the backward continued fraction map]{Multifractal analysis of Lyapunov exponent for the backward continued fraction map}

\begin{thanks}
{The author wishes to express his gratitude to Jan Kiwi, Mario Ponce and Juan Rivera Letelier for many fruitful discussions and comments during the preparation of this work. The author was partially supported by Proyecto Fondecyt 11070050 and by  Research Network on Low Dimensional Dynamics, CONICYT, Chile. }
\end{thanks}

\author{Godofredo Iommi} 
\address{Facultad de Matem\'aticas, Pontificia Universidad Cat\'olica de Chile (PUC), Avenida Vicu\~na Mackenna 4860, Santiago, Chile}
\email{giommi@mat.puc.cl}
\urladdr{http://www.mat.puc.cl/\textasciitilde giommi/}

\begin{document}

\begin{abstract}
In this note we study the multifractal spectrum of Lyapunov exponents for interval maps with infinitely many branches and a parabolic fixed point. It turns out that, in strong contrast with the hyperbolic case, the domain of the spectrum is unbounded and points of non-differentiability might exist. Moreover, the spectrum is not concave. We establish conditions that ensure the existence of inflection points. To the best of our knowledge this is the first time were conditions of this type are given. We also study the thermodynamic formalism for such maps. We prove that the pressure function  is real analytic in a certain interval and then it becomes equal to zero. We also discuss the  existence and uniqueness of equilibrium measures. In order to do so, we introduce a family of countable Markov shifts that can be thought of as a generalisation of the renewal shift. 
\end{abstract}

\maketitle
\section{Introduction}
Dynamical systems that are sufficiently hyperbolic have the property that almost all orbits move away from each other on time. When the system is defined on a compact space this produces a certain degree of mixing. In such a situation, the orbit structure becomes rather complicated. Therefore, it is of interest to quantify the rate at which the orbits become separated.

In this note we consider (piecewise) differentiable  maps
$T:I \to I,$  where $I \subset \mathbb{R}$ is a bounded union of closed intervals. 
The \emph{Lyapunov exponent} of the map $T$ at the point $x \in I$ is defined by
\[ \lambda_T(x) = \lambda(x) = \lim_{n \to \infty} \frac{1}{n} \log |(T^n)'(x)|, \]
whenever the limit exists.  It measures the exponential rate of divergence of infinitesimally close orbits. 

The Birkhoff ergodic theorem implies that if $\mu$ is an ergodic $T-$invariant measure,  such that $\int \log |T'| \ d \mu$ is finite, then $ \lambda_T(x)$ is constant $\mu$-almost everywhere. Nevertheless, it is possible for the Lyapunov exponent to attain a whole interval of values. In this note we address the problem of describing the range of these possible values and computing the size of the level sets determined by the Lyapunov exponent. More
precisely,   for $\alpha \geq 0$ consider,
\begin{equation*}
J(\alpha)= \Big{\{} x \in I :  \lim_{n \to \infty} \frac{1}{n} \log |(T^n)'(x)| = \alpha \Big{\}},
\end{equation*}
note that it is possible for $J(\alpha)=\emptyset$.  Let
\begin{equation*}
J'= \Big{\{} x \in I : \textrm{the limit} \lim_{n \to \infty} \frac{1}{n} \log |(T^n)'(x)| \textrm{ does not exists}  \Big{\}}.
\end{equation*}
Denote by $\Lambda$ the repeller corresponding to $T$ (see Section \ref{sistemas} for a precise definition).
The set $\Lambda$ can be decomposed in the following way (usually called 
\emph{multifractal decomposition}),
\begin{equation*}
\Lambda = J' \cup \left( \cup_{\alpha} J(\alpha) \right).
\end{equation*}
The function that encodes this decomposition is called \emph{multifractal spectrum of the Lyapunov exponents} and it is defined by 
\begin{equation*}
L(\alpha):= \dim_H(J(\alpha)),
\end{equation*}
where $\dim_H$ denotes the Hausdorff dimension of a set.

This problem  was  first studied by Weiss \cite{w1}, in the context of conformal expanding maps and Axiom A surface diffeomorphisms. He was able to relate the 
Lyapunov exponent with the pointwise dimension of a Gibbs measure. Using results of Pesin and Weiss \cite{pw1} on the multifractal spectrum of pointwise dimension he obtained a description of the Lyapunov spectrum. In particular, he showed that it is real analytic and has bounded domain. Work has also been done in the context of non-uniformly hyperbolic dynamics (there exists Lyapunov exponents  equal to zero). Pollicott and Weiss \cite{pw} and Nakaishi \cite{na} studied the case of the Manneville-Pomeau map (see also the work of Takens and Verbitskiy \cite{tv} and the work of Pfister and Sullivan \cite{ps}). This is an interval map with two branches and a parabolic fixed point at zero. In this case, the Lyapunov spectrum has bounded domain but it can have points where it is not analytic. Recently, Gelfert and Rams \cite{gr2} considered a broader class of such systems and described the Lyapunov spectrum.  Our results are based on their work. The Lyapunov spectrum has also been studied for the Gauss map, this is an interval map with countably many branches and  infinite topological entropy. Pollicott and Weiss \cite{pw} began the study of its Lyapunov spectrum and it was recently completed 
by Kesseb\"ohmer and Stratmann \cite{ks}. It was shown that the Lyapunov spectrum is real analytic and that it has unbounded domain.

This note is devoted to the study of the Lyapunov spectrum of non-uniformly hyperbolic dynamical systems with infinite topological entropy. Our main model is the, so called,
\emph{Renyi} map (see Section \ref{sistemas} for a precise definition). This is a map with a parabolic fixed point at zero and with infinite topological entropy.  It has no absolutely continuous invariant probability measure with respect to the Lebesgue measure.
It is closely related to the backward continued fraction (see Section \ref{sistemas} and \cite{gh, g}). It is also related to the geodesic flow on the modular surface \cite{af}. Schmeling and Weiss \cite{sw} proposed the study of multifractal spectrum of Lyapunov exponent for this map. Note that the Renyi map exhibits all the complicated behaviour of maps with a parabolic fixed point and of the Gauss map. Is the combination of these two features that makes the example interesting.

Our results are also valid for certain interval maps with a parabolic fixed point,  with infinite topological entropy and with a \emph{maximal measure} (that is, an invariant probability measure absolutely  continuous with respect to the $\dim_H(\Lambda)$-conformal measure).  These maps can be thought of  as a generalisation of the Manneville Pomeau maps to systems with infinite topological entropy.
Moreover, we recover the description of the Lyapunov spectrum for the Gauss map.

We prove that, if $T$ is a map with countably many full branches and (possibly) a parabolic fixed point then
\[L(\alpha) = \frac{1}{\alpha} \inf_{t \in \mathbb{R}} (P(-t \log |T'|) +t \alpha),\]
where $P(\cdot)$ denotes the topological pressure. For precise definitions and statements see Sections \ref{termo} and \ref{multi}.

In this infinite entropy  setting, the domain of the spectrum is unbounded and  points of non differentiability might exists. There are level sets for which there are no \emph{measures of full dimension} (that is an invariant measure, $\mu$, such that $\mu(J(\alpha))=1$ and $\dim_H \mu = \dim_H J(\alpha)$). We also give conditions that ensure the existence of inflection points (see Section \ref{i}). We stress that the spectrum is not concave. To the best of our knowledge this is the first time were conditions of this type are given.

One of the main difficulties in this setting is to describe the thermodynamic formalism. That is, the behaviour of the pressure function and the existence and uniqueness of equilibrium measures (see Section \ref{termo}). Two are the sources of difficulties, on the one hand we have to deal with the lack of hyperbolicity produced by the parabolic fixed point, and on the other, the natural symbolic model for these systems is the (non-compact) full-shift on a countable alphabet. We completely describe the thermodynamic formalism for maps with countably many full branches and a parabolic fixed point.
First, we prove existence of equilibrium measures with good ergodic properties corresponding to the potentials $-t \log |T'|$ with  $t \in (t^*, \dim_H \Lambda)$ (see Section \ref{termo} for precise statements). We consider sub-systems of $T$ and construct a sequence of invariant measures that converges to the desired equilibrium measure. Here, the interval structure of the system is used. In order to prove that the equilibrium measure is unique we make use of the theory of countable Markov shifts, as developed by Sarig \cite{sa1}.  The \emph{renewal shift} (see \cite{sa2}) has been used to model the Manneville Pomeau map.  We introduce a family of Markov shifts, that can be thought of as a generalisation of the  \emph{renewal shift}, which serve as symbolic models for maps with $N$-full  branches and a parabolic fixed point. The main feature of these Markov shifts is that they model the system minus the parabolic fixed point and its pre-imaeges.
That is, they serve as model for the hyperbolic part of the system. We describe the thermodynamic formalism for such maps.  We also consider a symbolic model for the hyperbolic part of maps with countably many branches and a parabolic fixed point.
Applying results of Buzzi and Sarig \cite{bs} to these Markov shifts we prove uniqueness of equilibrium measures. We also prove that the pressure function $t \to P(-t \log |T'|)$ is real analytic for  $t \in (t^*, \dim_H \Lambda)$ and equal to zero for $t \geq \dim_H\Lambda$.

Note that the above results are relevant also at a symbolic level. Indeed, the thermodynamic formalism is only completely understood for countable Markov shifts with very simple combinatorics. Thermodynamic formalism for Markov shifts close to the full-shift have been completely studied by Mauldin and Urba\'nski \cite{mu} and by Sarig \cite{sa2}. Also, the thermodynamic formalism for renewal shift was studied by Sarig \cite{sa2}. The Markov shifts we consider here are to be added to this list.

The structure of the paper is as follows. In Section \ref{sistemas} we define the dynamical systems that we are going to work with. Section \ref{termo} is devoted to the thermodynamic formalism for such systems. Existence and uniqueness of equilibrium measures is discussed. In Section \ref{multi} our main result concerning multifractal analysis of Lyapunov exponent is stated. Its differences with the classical hyperbolic setting are discussed. In Section \ref{i} we study conditions that ensure the existence of inflection points. Note that all the Lyapunov spectra considered in this paper have inflection points. The proofs are left to the end of the note.

\section{The dynamical systems} \label{sistemas}
In this section we describe the dynamical systems that we are going to consider.
Let $I=\cup_{n=1}^{\infty} I_n \subset [0,1]$ be a countable (infinite) union of closed intervals with disjoint interiors. Let $T:I \to [0,1]$ be a map such that
\begin{enumerate}
\item $T$ is of class $C^{1+ \epsilon}$  (on each sub-interval).
\item There exists $m \in \mathbb{N}$ and $p \in \cup_{n=1}^{\infty} I_n$
such that $|(T^m)'(x)|>1$ for every $x \in I \setminus \lbrace p \rbrace$.
\item $T(p)=p$ and $|T'(p)| \geq 1$  
\item If $int(I_n)$ denotes the interior of $I_n$, then $\overline{T(int(I_n))}=[0,1]$ for every $n \in \mathbb{N}$.
\item There exists $\gamma>1$  and $C>0$ such that for every $n \in \mathbb{N}$ and $x \in I_n$ we have that $C^{-1} n^{\gamma} \leq |T'(x)| \leq C  n^{\gamma}$. 
\end{enumerate}
Denote by $I_{i_0 \dots i_{n-1}}= \cap_{j=0}^{n-1} T^{-j} I_{i_j}$ the cylinder of length $n$. We also consider a distortion assumption  as in the work of Gelfert and Rams \cite{gr1, gr2}.
We assume the map $T$ to have the \emph{tempered distortion} property,
 that is, there
exists a positive sequence $(\rho_n)_n$ decreasing to zero such that for every $n \in \mathbb{N}$ we have
\[ \sup_{(i_0 \dots i_n)} \sup_{x,y \in I_{i_0 \dots i_{n}}} \dfrac{|(T^n)'(x)|}{|(T^n)'(y)|} \leq \exp(n \rho_n). \]
Such maps  will be called \emph{Markov-Renyi maps} or simply \emph{MR-maps}.
The repeller $\Lambda$  of $T$ is defined by
\[ \Lambda:= \bigcap_{n=0}^{\infty} T^{-n} I.      \]
We will consider three examples, each of one exhibiting one of the possible behaviours 
of the multifractal spectrum.

\subsection{The Gauss map}
An irrational number $ x \in (0,1)$  can be written as a continued fraction of the form
\begin{equation*}
x = \textrm{ } \cfrac{1}{a_1 + \cfrac{1}{a_2 + \cfrac{1}{a_3 + \dots}}} = \textrm{ } [a_1 a_2 a_3 \dots],
\end{equation*}
where $a_i \in \mathbb{N}$. For a general account on continued fractions see \cite{hw, k}. The $n-th$ approximant
$p_n(x) / q_n(x)$ of the number $x \in [0,1]$ is defined by
\begin{equation} \label{approx}
\frac{p_n(x)}{q_n(x)} = \cfrac{1}{a_1 + \cfrac{1}{a_2 + \cfrac{1}{\dots + \frac{1}{a_n}}}} 
\end{equation}
The Gauss map  $G :(0,1] \to (0,1]$, is the interval map defined by 
\[G(x)= \frac{1}{x} -\Big[ \frac{1}{x} \Big], \]
This map is closely related to the continued fraction expansion. 
Indeed, for $0 < x <1$ with $x=[a_1 a_2 a_3 \dots ]$ we have that
$a_1=[1/x], a_2=[1/Gx], \dots, a_n=[1/G^{n-1}x]$. In particular, the Gauss
map acts as the shift map on the continued fraction expansion,
\begin{equation*}
 a_n = \Big[1/G^{n-1}x \Big].
\end{equation*}

The Lyapunov exponent of the Gauss map $G$ at the point $x$ ,
whenever the limit exists, satisfies (see \cite{pw})
\begin{equation} \label{aproximacion}
\lambda(x)=- \lim_{n \to \infty} \frac{1}{n} \log \Big| x - \frac{p_n(x)}{q_n(x)} 
\Big|,
\end{equation}
Therefore, the Lyapunov exponent of the Gauss map quantifies the exponential speed of 
approximation of a number by its approximants (see \cite{pw}). 
Note that in this case we  have that  $\dim_H(\Lambda)=1$. There exists an absolutely continuous ergodic $G-$invariant measure,
 $\mu_G$, called the \emph{Gauss measure}, defined by
\[ \mu_G(A)= \frac{1}{\log 2} \int_A \frac{1}{1+x} dx .\]
where $A \subset [0,1]$ is a Borel set.  From the Birkhoff ergodic theorem we obtain that $\mu_G$-almost everywhere (and hence Lebesgue almost everywhere)
\begin{equation} \label{casitodos}
\lambda(x)=  \frac{\pi^2}{6 \log 2}.
\end{equation}
Note that the range of values of the Lyapunov exponent is $[ 2 \log \left(\frac{1 + \sqrt{5}}{2} \right), \infty )$ (see \cite{ks, pw}).

\subsection{The infinite Manneville-Pomeau map}
This is a generalisation of the well known Manneville Pomeau map \cite{mp}. 
Let $I_n= \Big[ \frac{n-1}{n}, \frac{n}{n+1} \Big)$, for every $n \geq 1$.
The map $T: \cup_{n=1}^{\infty} I_n \to [0,1]$ is defined by 
\[T(x)|I_n=n(n+1) x + \dfrac{1-n}{n+1}, \]
for every $n >1$ and $T|I_1$ is such that 
\begin{enumerate}
\item $\overline{T(I_1)}=[0,1]$,
\item $T'(x)>1$ for every  $x \in I_1 \setminus \lbrace 0 \rbrace$, 
\item $T(0)=0$ and $T'(0)=1$,
\item There exits $\beta>0$ and $\delta >0$ such that 
$T|_{[0,\delta)}(x)=x+x^{1+\beta}$.
\end{enumerate}
Note that in this case $\dim_H(\Lambda)=1$.  If $\beta \in [0,1]$ the map
has a probability invariant measure absolutely continuous with respect to the Lebesgue measure. Note that the orbits spend a large amount of time near the parabolic fixed point. The condition on the class of differentiability implies that the amount of time is not long enough to make the invariant measure infinite (see \cite{lu}). Indeed, in this case
the map is of class $C^{1+ \epsilon}$ but not of class $C^2$. 

If $\beta >1$ then the map has a sigma-finite (but infinite) invariant measure absolutely continuous with respect to the Lebesgue measure.

\subsection{The Renyi map}
The map $R:[0,1) \to [0,1)$ is defined by
\[ R(x)=\frac{1}{1-x} -\Big[\frac{1}{1-x} \Big],  \]
where  $[a]$ denotes the integer part of the number $a$.
It was introduced by Renyi in \cite{re} and we will refer to it as the \emph{Renyi map}. The ergodic properties of this map have been studied, among others, by Adler and Flatto \cite{af} and by Renyi himself \cite{re}. This is a map with infinitely many branches and infinite topological entropy. It has a parabolic fixed point at zero.
It is closely related to the backward continued fraction algorithm \cite{gh,g}. 
Indeed, every irrational number $x \in [0,1)$ has unique infinite backward continued fraction expansion of the form
\begin{equation*}
x = \textrm{ } \cfrac{1}{a_1 - \cfrac{1}{a_2 - \cfrac{1}{a_3 - \dots}}} = \textrm{ } [a_1 a_2 a_3 \dots]_B,
\end{equation*}
where the coefficients $\lbrace a_i \rbrace$ are integers such that $a_i >1$. The Renyi map acts as the shift on the backward continued fraction (see \cite{gh, g}). In particular, 
\[\textrm{If } x =[a_1 a_2 a_3 \dots]_B \textrm{ then } R(x)=[a_2 a_3 \dots]_B.\]
This continued fraction has been used, for example, to obtain results on inhomogenous diophantine approximation (see \cite{pi}).
Note that in this case $\dim_H(\Lambda)=1$ and that the $1-$conformal measure is the Lebesgue measure. Renyi \cite{re} showed that there exists an infinite $\sigma-$finite invariant measure, $\mu_R,$  absolutely continuous with respect to the Lebesgue measure. It is defined by
\[ \mu_R(A) = \int_A \dfrac{1}{x} \ dx, \]
where $A \subset [0,1]$ is a Borel set. 
There is no finite  invariant measure absolutely continuous with respect to the Lebesgue measure.

\subsection{A pathological example} \label{pathological}
The following example does not satisfy condition $(5)$ and will be used in Section \ref{termo} to illustrate the type of phenomena that we are ruling out by imposing this condition.

Let $x(n)=2n(\log 2n)^2$ ad take $N>0$ such that $\sum_{n>N} x(n)^{-1} <1$. Let $(I_n)_n$
be a  sequence of disjoint  intervals such that $I_n \subset [0,1]$ and $|I_n|= x(n)^{-1}$, where $| \cdot |$ denotes the length of the interval. For every $n>1$ define $T|I_n$ as a piecewise linear map of slope
$x(n)$. Let $I_1=[0,x(1)]$ and define $T|I_1$ to be such that 
\begin{enumerate}
\item $\overline{T(I_1)}=[0,1]$,
\item $T'(x)>1$ for every  $x \in I_1 \setminus \lbrace 0 \rbrace$, 
\item $T(0)=0$ and $T'(0)=1$,
\item There exits $\alpha>0$ and $\delta >0$ such that 
$T|_{[0,\delta)}(x)=x+x^{1+\alpha}$.
\end{enumerate}

\section{Thermodynamic formalism} \label{termo}
A major tool in the study of multifractal analysis (and in the dimension theory of dynamical systems in general)  is the thermodynamic formalism.
In the present setting, the version for countable Markov shifts developed by Sarig \cite{sa1}  will be used (see also the work by Mauldin and Urba\'nksi \cite{mu}). 

\begin{defi}
Let $T$ be an MR-map, denote by $ \mathcal{M}_T$ the set of $T-$invariant probability measures. Let $\phi: \Lambda \to \mathbb{R}$ be a continuous \emph{potential}. The \emph{topological pressure} of $\phi$ with respect to $T$ is defined, via the variational principle,  by
\begin{equation*}
P_T(\phi)=P(\phi) = \sup \lbrace h(\mu) + \int \phi \ d\mu :  \mu \in \mathcal{M}_T    
 \textrm{ and } - \int \phi \ d\mu < \infty\rbrace,
\end{equation*}
where $ h(\mu)$ denotes the measure theoretic entropy of $T$ with respect to $\mu$.
\end{defi}

A measure $\mu_{\phi} \in  \mathcal{M}_T$ is called an \emph{equilibrium measure} for $\phi$ if it satisfies:
\[ P(\phi) = h(\mu_{\phi}) + \int \phi \ d\mu_{\phi}. \]

The difficulties to describe the thermodynamic formalism are twofold.  First, we have to deal with the lack of hyperbolicity produced by the parabolic fixed point. Secondly, the natural symbolic model for these maps is the (non-compact) full-shift on an infinite alphabet. As one might expect, these two dynamical features reflects on the behaviour of the pressure. Indeed, for $t < \dim_H(\Lambda)$ the behaviour of  the pressure is governed by the sub-systems with large entropy and for $t>\dim_H(\Lambda)$ it is governed by the parabolic fixed point.  If there is no parabolic fixed point (as in the case of the Gauss map) then it is only the sub-systems of positive entropy that have influence on the pressure.

The next theorems describe the thermodynamic formalism for MR-maps. 
We will rule out the trivial case in which $\log |T'|$ is cohomologous to a constant. That is, the case in which there exists a continuous function $\psi: \Lambda \to \mathbb{R}$ and a constant $c \in \mathbb{R}$ such that
$\log |T'| = \psi - \psi \circ T +c $. We will
assume that the parabolic fixed point $p \in I$ is equal to zero.

\begin{teo} \label{termo2}
Let $T$ be a MR-map. If $T(0)=0$ and $|T'(0)|=1$ then there exists $t^* \geq 0$ such that
\begin{enumerate}
\item If $t < t^*$ then $P(-t\log |T'|)= \infty.$
\item If $t \in (t^*, \dim_H(\Lambda))$ then  $P(-t\log |T'|)$ is finite, positive, strictly decreasing, strictly convex and real analytic.  Moreover, there exists a unique equilibrium measure for $-t \log|T'|.$ 
\item If $t > \dim_H(\Lambda)$ then  $P(-t\log |T'|)=0$ and the Dirac delta at zero, $\delta_0,$ is the only equilibrium measure for  $-t \log|T'|$.
\end{enumerate}
The pressure function is differentiable at $t=\dim_H(\Lambda)$ if and only if  $\delta_0$ is the only equilibrium measure for $ -\dim_H(\Lambda) \log |T'|$.
\end{teo}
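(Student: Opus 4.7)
The plan is to split the analysis into the three regimes of $t$ and then analyze the phase transition at $t = \dim_H(\Lambda)$. First, set $t^* := \inf\{ t \geq 0 : P(-t\log|T'|) < \infty \}$. Using growth estimate (5) together with tempered distortion and the fact that the branches are full, the one-step partition sum in a Gurevich-type computation of the pressure is comparable to $\sum_{n \geq 1} n^{-t\gamma}$, which diverges for $t \leq 1/\gamma$. Testing the variational principle against Bernoulli-type measures concentrated on large-index cylinders shows this divergence is actually attained, giving $P(-t\log|T'|) = +\infty$ on $[0,t^*)$, and incidentally $t^* \leq 1/\gamma$. Monotonicity of $t \mapsto P(-t\log|T'|)$ is clear from the definition, so the pressure is finite on $(t^*,\infty)$.

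For the super-critical regime $t > \dim_H(\Lambda)$, the Dirac mass $\delta_0$ satisfies $h(\delta_0) - t \int \log|T'| \, d\delta_0 = 0$ since $|T'(0)| = 1$, so $P(-t\log|T'|) \geq 0$. For the matching upper bound I would invoke the dimensional inequality $h(\mu) \leq \dim_H(\Lambda) \cdot \int \log|T'| \, d\mu$ valid for any $\mu \in \mathcal{M}_T$ with positive Lyapunov exponent (Young's formula plus $\dim_H \mu \leq \dim_H(\Lambda)$, verified on the hyperbolic sub-system and extended by approximation). This yields $h(\mu) - t \int \log|T'| \, d\mu \leq (\dim_H(\Lambda) - t) \int \log|T'| \, d\mu < 0$. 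A measure with zero Lyapunov exponent must be concentrated on $\{0\}$ because $T$ is uniformly expanding off any neighborhood of $0$, hence equal to $\delta_0$.

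The substantive part is the analytic regime $t \in (t^*, \dim_H(\Lambda))$. For existence I would exhaust $T$ by uniformly expanding sub-systems $T_N$ supported on a union of cylinders bounded away from $0$, invoke classical thermodynamic formalism on each to obtain equilibrium measures $\mu_{N,t}$, and use tightness (condition (5) together with $t > t^*$ controls the tail mass as $n \to \infty$) to extract a weak-$*$ limit $\mu_t$; upper semicontinuity of entropy on the symbolic coding and the relation $h(\mu_{N,t}) - t \int \log|T'| \, d\mu_{N,t} \to P(-t\log|T'|)$ shows $\mu_t$ is an equilibrium measure. Uniqueness, real analyticity, and strict convexity come by passing to the symbolic model described in the introduction: the complement of the orbit of the parabolic point is coded by a generalized renewal shift, the potential $-t\log|T'|$ is locally H\"older by tempered distortion, and for $t$ in the stated interval it is positively recurrent in the sense of Sarig. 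This produces a spectral gap for the Ruelle-Perron-Frobenius operator, whence real analyticity follows from Kato's analytic perturbation theory and strict convexity follows from the standing hypothesis that $\log|T'|$ is not cohomologous to a constant; strict monotonicity then follows from strict convexity and the fact that $\int \log|T'| \, d\mu_t > 0$. Uniqueness on $\Lambda$ is transported back from the symbolic model via the Buzzi-Sarig uniqueness theorem, noting that the single orbit of the parabolic point cannot support entropy beyond $\delta_0$.

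For the differentiability criterion at $t_0 := \dim_H(\Lambda)$, convexity identifies the one-sided derivatives with $-\int \log|T'| \, d\mu$ for equilibrium measures $\mu$ at $t_0$: the right derivative equals $-\int \log|T'| \, d\delta_0 = 0$, while the left derivative equals $-\int \log|T'| \, d\mu_{t_0}$ for any non-atomic equilibrium measure $\mu_{t_0}$ at $t_0$ (constructed as a weak-$*$ accumulation point of the $\mu_t$ from the analytic phase when it exists). Since the latter integral is strictly positive whenever $\mu_{t_0} \neq \delta_0$, differentiability at $t_0$ holds if and only if $\delta_0$ is the only equilibrium measure there. The main obstacle I foresee is establishing positive recurrence of $-t\log|T'|$ throughout the whole interval $(t^*, \dim_H(\Lambda))$: it requires a quantitative analysis of the return-time generating functions of the generalized renewal shift, where the growth estimate (5) has to be used sharply, and it is the step that converts Sarig's general theory into the clean analytic and uniqueness statements of parts (2) and (3).
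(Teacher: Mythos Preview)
Your overall architecture matches the paper's: define $t^*$ via finiteness of pressure, use condition~(5) for the divergence below $t^*$, handle $t>\dim_H(\Lambda)$ via $\delta_0$ and the inequality $h(\mu)\le \dim_H(\Lambda)\int\log|T'|\,d\mu$, build equilibrium measures by approximation with finite sub-systems, and deduce uniqueness on the $\infty$-renewal shift from Buzzi--Sarig. The substantive divergence is in the proof of real analyticity, and there your argument has a genuine gap.

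You propose to obtain analyticity from a spectral gap for the Ruelle operator on the generalised renewal shift, inferred from positive recurrence. That inference fails: for countable Markov shifts without the big-images-and-preimages property---and renewal-type shifts are the prototypical example, since from a deep tail state one can only move one step toward the renewal block---positive recurrence does \emph{not} imply a spectral gap (this is exactly the distinction between positive and \emph{strong} positive recurrence in Sarig's theory; his phase-transition paper on the renewal shift exhibits positively recurrent potentials with non-analytic pressure). So Kato perturbation is unavailable on the renewal model, and your route to analyticity is blocked at precisely the point you flagged as the main obstacle.

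The paper bypasses this by inducing. It takes the first-return map $F=T^{\tau}$ to $\tilde I=\bigcup_{n\ge 2}I_n$; the induced system is a \emph{full} shift on a countable alphabet, where analyticity of $(t,q)\mapsto P_F(-t\log|F'|-q\tau)$ is known whenever finite. One then shows $P_F\bigl(-t\log|F'|-P_T(-t\log|T'|)\,\tau\bigr)=0$ via Abramov's formula and the variational principle, checks $\partial_q P_F=-\int\tau\,d\tilde\mu_t\ne 0$, and applies the implicit function theorem. Finiteness of $P_F(t,q)$ for $t>t^*$, $q>0$ follows from condition~(5) together with the polynomial decay of the return-time cylinders.

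A smaller point: in your existence step you invoke upper semicontinuity of entropy on the symbolic coding, which is not automatic for countable-alphabet shifts. The paper instead fixes the Lyapunov exponent $\alpha$ rather than $t$, takes equilibrium measures $\mu_n$ on the sub-systems $T_n=T|_{I_1\cup\dots\cup I_n}$ (which still contain the parabolic point) satisfying $\int\log|T'|\,d\mu_n=\alpha$, and uses this integral constraint to obtain tightness; the passage to the limit then goes through the monotone convergence $P_n\uparrow P$ of pressures rather than through semicontinuity of entropy.
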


The Renyi map, $R$, satisfies the hypothesis of the above theorem. In this case we  have that $t^* =1/2$. Since $\dim_H(\Lambda)=1$ and there is no finite absolutely continuous measure with respect to the Lebesgue measure, the pressure function is
differentiable at $t=1$. 
A MR-map $T$ for which the pressure function behaves like in Theorem \ref{termo2} and which is differentiable at $t= \dim_H(\Lambda)$ will be called \emph{Renyi like}.

On the other hand, the infinite Manneville Pomeau map, when $\beta \in (0,1)$,  is such that $\dim_H(\Lambda)=1$ and it has a finite absolutely continuous measure with respect to Lebesgue. Therefore, the pressure function is not differentiable at $t=1$.
A MR-map $T$ for which the pressure function behaves like in Theorem \ref{termo2} and which is not differentiable at $t= \dim_H(\Lambda)$ will be called \emph{infinite Manneville Pomeau like}.


The main feature of the proof  of  Theorem \ref{termo2} (see Section \ref{proof-termo}) is the construction of a symbolic model for the systems without the parabolic fixed point. This is a generalisation of the method used to study Manneville-Pomeau maps. Using results by Buzzi and Sarig \cite{bs} it is possible to prove that the equilibrium measure is unique. Note that the proof of existence is achieved using approximation arguments on the interval.

If the map $T$ does not have a parabolic fixed point then the thermodynamic formalism was described in \cite{pw},
\begin{teo} [Pollicott-Weiss] \label{termo1}
Let $T$ be a MR-map. If there exists $m >0$ such that $|(T^m)'(x)|>1$ for every $x \in \Lambda$ then there exists $t^* \geq 0$ such that
\begin{enumerate}
\item If $t < t^*$ then $P(-t\log |T'|)= \infty.$
\item If $t >t^*$ then  $P(-t\log |T'|)$ is finite,  strictly decreasing, strictly convex and real analytic. Moreover, there exists a unique equilibrium measure for $-t \log|T'|$.
\end{enumerate}
\end{teo}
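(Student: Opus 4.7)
The plan is to code $(T,\Lambda)$ as the full shift $(\mathbb{N}^{\mathbb{N}},\sigma)$ through the full-branch Markov partition $\{I_n\}$, and then to apply Sarig's thermodynamic formalism for countable Markov shifts \cite{sa1,sa2} (together with the analogous results of Mauldin and Urba\'nski \cite{mu}). Hypothesis~(2) supplies an iterate $T^m$ with $|(T^m)'|>1$ throughout $\Lambda$, so after replacing $T$ by $T^m$ (which only rescales the pressure by $1/m$) we may assume $|T'|>1$ on $\Lambda$. The tempered distortion estimate translates to $\mathrm{var}_n(-t\log|T'|) \leq t\rho_n$ on the coded shift, so the potential $\phi_t := -t\log|T'|$ is locally H\"older with summable variations for every $t\geq 0$, and the full shift is topologically mixing with the BIP property.

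Define $t^{\ast}:= \inf\{t\geq 0 : P(\phi_t) < \infty\}$. Assumption~(5) gives $|T'|_{I_n}\asymp n^{\gamma}$, so the one-variable Gurevich partition function is comparable to $\sum_n n^{-\gamma t}$; this forces $t^{\ast} \leq 1/\gamma \in [0,\infty)$, and item~(1) is then immediate from the definition of $t^{\ast}$.

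For item~(2) and $t>t^{\ast}$, Sarig's generalised Ruelle--Perron--Frobenius theorem produces a simple leading eigenvalue $e^{P(\phi_t)}$ of the transfer operator $\mathcal{L}_{\phi_t}$ with a spectral gap, and hence a unique conservative Gibbs measure $\mu_t$. The BIP property together with summable variations then force, via \cite{bs,mu}, that $\mu_t$ is the unique equilibrium measure for $\phi_t$; here one has to verify finite entropy and the integrability condition $-\int\phi_t\,d\mu_t<\infty$, both of which follow from the Gibbs property combined with condition~(5). Real analyticity of $t\mapsto P(\phi_t)$ on $(t^{\ast},\infty)$ comes from Kato's analytic perturbation theory applied to the complex family $\mathcal{L}_{-z\log|T'|}$, which depends analytically on $z$ in a neighbourhood of $(t^{\ast},\infty)$ and retains the spectral gap. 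Strict monotonicity and strict convexity follow from the standard formulae
\[
\tfrac{d}{dt}P(\phi_t) = -\int \log|T'|\,d\mu_t < 0, \qquad \tfrac{d^{2}}{dt^{2}}P(\phi_t) = \sigma^{2}_{\mu_t}(\log|T'|) > 0,
\]
the positive variance using the standing hypothesis that $\log|T'|$ is not cohomologous to a constant.

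The principal obstacle is controlling $\mathcal{L}_{\phi_t}$ near the critical value $t^{\ast}$: one must verify that Sarig's positive-recurrence hypothesis persists for every $t>t^{\ast}$, so that the spectral gap (and therefore uniqueness of the equilibrium measure and analyticity of the pressure) extends uniformly across the full half-line without an interior phase transition. This is standard for the full shift once BIP and summable variations are in hand, but it is the step where condition~(5) is used most directly, through the precise asymptotics of the Gurevich partition function.
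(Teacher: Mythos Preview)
Your overall strategy---code $T$ by the full shift on $\mathbb{N}$ and invoke the Sarig/Mauldin--Urba\'nski thermodynamic formalism, exploiting that the full shift has the BIP property---is exactly the route the paper points to: it attributes Theorem~\ref{termo1} to Pollicott--Weiss \cite{pw} and remarks that alternatively the approximation and inducing machinery developed for Theorem~\ref{termo2} carries over with the interval $(t^*,\dim_H\Lambda)$ replaced by $(t^*,\infty)$. So the broad outline is correct and aligned with the paper.

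There is, however, a technical gap in your regularity claim. You assert that the tempered distortion hypothesis yields $\mathrm{var}_n(-t\log|T'|)\le t\rho_n$, hence summable variations. That is not what tempered distortion says: the hypothesis bounds the variation of the \emph{Birkhoff sum} $S_n\log|T'|$ over $(n{+}1)$-cylinders by $n\rho_n$, not the variation of the one-step potential over $n$-cylinders. From tempered distortion alone you can only extract $\mathrm{var}_{n+1}(\log|T'|)\le n\rho_n+(n{-}1)\rho_{n-1}$, which is summable only under the much stronger assumption $\sum n\rho_n<\infty$. This matters because Sarig's RPF theorem with spectral gap, Buzzi--Sarig uniqueness, and the Kato perturbation argument you invoke all require summable variations (or local H\"older), and tempered distortion is strictly weaker---indeed Gelfert--Rams \cite{gr1,gr2} introduce it precisely as a weakening. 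To repair this step you should instead appeal to the $C^{1+\epsilon}$ regularity in hypothesis~(1): Hölder continuity of $\log|T'|$ on each branch, combined with uniform contraction of cylinder diameters once you have passed to the expanding iterate $T^m$, gives exponentially decaying variations and hence the summable-variation hypothesis that Sarig's machinery actually needs. The rest of your argument (finiteness of $t^*$ via condition~(5), the derivative formulas for monotonicity and convexity, analyticity via perturbation of the transfer operator) is sound.
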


The Gauss map, $G$, satisfies the above hypothesis. In this case 
$t^* =1/2$.  Mayer \cite{ma} proved that the pressure function $t \to
P(-t\log |G'|)$ has a logarithmic singularity at $1/2$ and that for $t > 1/2$,  
is real analytic. With our methods we recover the description of the pressure function. Results on ergodic optimization \cite{jmu1} allow us to prove that the slope of the asymptote for $t \to + \infty$ is the golden mean: $(1 +\sqrt{5})/2$.
A MR-map $T$ for which the pressure function behaves like in Theorem \ref{termo1} will be called \emph{Gauss like}.

\begin{eje}
Let $T$ be the \emph{pathological example} defined in subsection \ref{pathological}. The pressure function for this map is such that 
\[
P(-t\log|T'| )=
\begin{cases}
\infty& \text{ if } t < \dim_H(\Lambda),\\
\text{0 }& \text{ if } t \geq \dim_H(\Lambda).
\end{cases}
\]
\end{eje}

\section{The Lyapunov spectrum} \label{multi}
The Lyapunov spectrum of a piecewise uniformly expanding map, $T: \cup_{i=1}^n I_i \to I$, was described by Weiss \cite{w1}. It is real analytic and it has bounded domain.
In our setting both properties might fail simultaneously.  We will prove that the parabolic fixed point can force the Lyapunov spectrum to have points of non-differentiability. The fact that the system has infinitely many branches implies that the spectrum always has unbounded domain. Moreover, it always has an inflection point (see Section \ref{i} for a more detailed discussion).

The formula we obtain for the multifractal spectrum has been obtained in other settings \cite{gr2, ks,na, pw}.  We now state our results regarding the multifractal spectrum. 

\begin{teo} \label{multi-main}
Let $T$ be an MR-map. Then the domain of $L(\cdot)$ is an unbounded sub-interval of $[0, \infty)$ and
\begin{equation}
L(\alpha) = \frac{1}{\alpha} \inf_{t \in \mathbb{R}} (P(-t \log |T'|) +t \alpha).
\end{equation}
Also, $\dim_H(J')= \dim_H(\Lambda)$.
\end{teo}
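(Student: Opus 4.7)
The plan is to run a conditional variational principle: show that the multifractal spectrum is the Legendre--type transform of the pressure function $\Phi(t):=P(-t\log|T'|)$, in the style of Pesin--Weiss \cite{pw1} and Gelfert--Rams \cite{gr2}, adapting their arguments to the non--compact infinite--branch setting with the help of Section \ref{termo}. By Theorems \ref{termo2} and \ref{termo1}, $\Phi$ is finite, strictly convex, strictly decreasing and real analytic on $(t^*,\dim_H\Lambda)$. Set $\alpha_{-}:=\lim_{t\uparrow\dim_H\Lambda}(-\Phi'(t))$ and $\alpha_{+}:=\lim_{t\downarrow t^*}(-\Phi'(t))$; both limits exist in $[0,\infty]$. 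Condition (5) on the growth of $|T'|$ on the branches forces $\alpha_{+}=+\infty$, which will give the unboundedness of the domain of $L$. The interval structure of $(\alpha_{-},\alpha_{+})$ together with continuity of $\alpha\mapsto t(\alpha)$ will give that the domain is a sub-interval of $[0,\infty)$.

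For the lower bound on $L(\alpha)$ with $\alpha\in(\alpha_{-},\alpha_{+})$, pick the unique $t=t(\alpha)\in(t^*,\dim_H\Lambda)$ with $-\Phi'(t)=\alpha$ and let $\mu_{\alpha}$ be the equilibrium measure for $-t\log|T'|$ provided by Theorem \ref{termo2}. Ergodicity and Birkhoff's theorem give $\mu_{\alpha}(J(\alpha))=1$, while the Young entropy formula yields $\dim_H\mu_{\alpha}=h(\mu_{\alpha})/\alpha=(\Phi(t)+t\alpha)/\alpha$, which equals the claimed infimum by strict convexity. For the matching upper bound I fix $t$, cover $J(\alpha)$ by cylinders $I_{i_0\dots i_{n-1}}$ of length $n$ on which $|(T^n)'|$ is close to $e^{n\alpha}$, and use tempered distortion to bound their diameters by $e^{-n\alpha}$ up to a subexponential factor. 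The partition function estimate
\[
\sum_{(i_0,\dots,i_{n-1})}\sup_{I_{i_0\dots i_{n-1}}}\exp\bigl(-t\log|(T^n)'|\bigr)\le \exp\bigl(n(\Phi(t)+o(1))\bigr)
\]
then bounds the $s$--dimensional Hausdorff measure of $J(\alpha)$ for $s=(\Phi(t)+t\alpha)/\alpha$, giving $L(\alpha)\le (\Phi(t)+t\alpha)/\alpha$ for every $t$. The boundary values and possible plateaus of $L$ at $\alpha=\alpha_{-}$ are handled by approximation, using the uniqueness clause at $t=\dim_H\Lambda$ in Theorem \ref{termo2}.

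To realize arbitrarily large $\alpha$, and thereby conclude that the domain of $L$ is unbounded, I apply Theorem \ref{termo1} to uniformly hyperbolic finite sub--systems $T|_{\bigcup_{n\in F}I_n}$, whose pressure functions satisfy the full variational principle. Condition (5) lets me push $\inf_{n\in F}\log|T'|$ as large as desired, producing compact invariant subsets of positive Hausdorff dimension inside $J(\alpha)$ for every sufficiently large $\alpha$. Finally, $\dim_H J'=\dim_H\Lambda$ follows from a Barreira--Schmeling type construction: take two ergodic equilibrium measures with distinct Lyapunov exponents together with long typical orbit blocks for each, concatenate these blocks alternately inside finite sub--systems whose Hausdorff dimensions approach $\dim_H\Lambda$, and apply a Moran--type dimension estimate controlled by the tempered distortion property.

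The main obstacle will be the covering argument for the upper bound in the non--compact setting: because the alphabet is infinite and the distortion bound is only asymptotic, one has to truncate the alphabet at a level depending on both $n$ and $t$, and show that the tail $\sum_{n>N}\sup_{I_n}|T'|^{-t}$ is summable, which is precisely what condition (5) is designed to ensure. A second delicate point is patching the finite--alphabet sub--systems from the previous paragraph onto the full Legendre transform of $\Phi$ while preserving the identity across the possibly non--smooth junction at $t=\dim_H\Lambda$; this is ultimately controlled by the uniqueness statement in Theorem \ref{termo2}.
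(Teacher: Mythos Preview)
Your outline is viable, but it swaps the roles that the paper assigns to its two main tools. For the \emph{lower} bound the paper never touches the equilibrium measures of $T$: it restricts to the finite-branch subsystems $T_n:=T|_{\bigcup_{i\le n}I_i}$, quotes the Gelfert--Rams formula $L_n(\alpha)=\frac{1}{\alpha}\inf_t(P_n(-t\log|T'|)+t\alpha)$ (each $T_n$ has finite entropy, so \cite{gr2} applies verbatim), and then uses Wijsman's duality for Fenchel pairs together with $P_n\uparrow P$ to pass to the limit. Since $L_n\le L$ this gives the lower bound for \emph{all} $\alpha$ at once, including the plateau $[0,\alpha^*]$ in the Manneville--Pomeau--like case, where the limit equals $\dim_H\Lambda$ and is therefore automatically also the upper bound. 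Your equilibrium-measure route to the lower bound works cleanly on $(\alpha_-,\alpha_+)$, but your ``handled by approximation'' for the plateau is exactly the paper's argument; the paper simply runs that approximation uniformly and never needs to split into cases. Conversely, for the \emph{upper} bound off the plateau the paper does what you propose for the lower bound: it takes $\mu_\alpha$ from Section~\ref{termo}, invokes the Pollicott--Weiss local-dimension estimate $\underline{d}_{\mu_\alpha}(x)\le h(\mu_\alpha)/\alpha$, and applies \cite[Theorem~7.2]{pe}. Your direct covering via the partition function is a legitimate and more elementary alternative that avoids the existence of $\mu_\alpha$; the cost is the truncation bookkeeping you flag, plus checking that the variationally defined pressure really bounds the full cylinder sum (this uses the full-shift combinatorics and tempered distortion, and is not quite immediate from Sarig's Gurevich-pressure definition, which restricts to periodic points in a single cylinder).

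For $J'$ the paper again leans on the finite subsystems: Gelfert--Rams already records that Barreira--Schmeling gives $\dim_H J_n'=\dim_H\Lambda_n$, and $\sup_n\dim_H\Lambda_n=\dim_H\Lambda$ finishes in one line. Your concatenation construction would work but is unnecessary given this shortcut. In short, the paper's strategy is to push as much as possible onto the known finite-branch theory via approximation and Fenchel duality, keeping the new input (Section~\ref{termo}) for the upper bound only; your plan is more self-contained but has to redo several estimates that the paper imports wholesale from \cite{gr2} and \cite{pw}.
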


The following number plays an important role in the description of the multifractal spectrum,
\begin{defi}
Let
\begin{equation} \label{estrella}
\alpha^* :=  \lim_{t \to \dim_H(\Lambda)^-} \frac{d}{dt} P(-t \log |T'|).
\end{equation}
Because of the convexity of the pressure such a limit always exists.
\end{defi}

Since the pressure function has three different types of behaviour this yields to three different types of Lyapunov spectra.

\begin{teo} \label{para}
If $T$ is a MR-map with a parabolic fixed point then the Lyapunov spectrum has unbounded domain and it has an inflection point. Moreover,
\begin{enumerate}
\item if the pressure function is differentiable at $t=\dim_H \Lambda$ then the multifractal spectrum is real analytic; it is strictly decreasing and there exists a measure of full dimension for every level set.
\item If there exists a measure of maximal dimension for the repeller then $\alpha^* >0$ and for every  $\alpha \in [0, \alpha^*]$ we have $L(\alpha)= \dim_H (\Lambda)$  .  If    $\alpha >\alpha^*$ then there exists measures of full dimension for every level set $J(\alpha)$ and the multifractal spectrum is real analytic.
\end{enumerate}
\end{teo}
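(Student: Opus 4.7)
The plan is to exploit the Legendre-transform formula
\[
L(\alpha) = \frac{1}{\alpha}\inf_{t \in \mathbb{R}}\bigl(P(-t\log|T'|) + t\alpha\bigr)
\]
from Theorem \ref{multi-main}, combined with the piecewise description of $t \mapsto P(-t\log|T'|)$ in Theorem \ref{termo2}. The key structural fact is that the pressure consists of a real analytic, strictly convex, strictly decreasing branch on $(t^*, \dim_H \Lambda)$ glued to the constant zero branch on $[\dim_H \Lambda, \infty)$; the multifractal spectrum is determined by how these two branches are joined at $\dim_H \Lambda$. Unboundedness of the domain follows from assumption (5): since $|T'|\asymp n^\gamma$ on $I_n$, finite subsystems carry invariant measures with arbitrarily large Lyapunov exponents, so the Legendre infimum remains finite and positive for arbitrarily large $\alpha$. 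The inflection point assertion I would defer to Section \ref{i}, where sufficient conditions are established.

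For case (1), differentiability of the pressure at $\dim_H\Lambda$ forces $\alpha^* = 0$ (because the right derivative at $\dim_H\Lambda$ is zero). Thus for every $\alpha > 0$ the infimum is attained at an interior point $t(\alpha) \in (t^*,\dim_H\Lambda)$ satisfying $\frac{d}{dt}P(-t\log|T'|) = -\alpha$, and the implicit function theorem applied to the real analytic strictly convex pressure yields $t(\alpha)$, hence $L(\alpha)$, real analytic. Strict monotonicity of $L$ is inherited from strict convexity of the pressure. The measure of full dimension on $J(\alpha)$ is the equilibrium measure $\mu_{t(\alpha)}$ provided by Theorem \ref{termo2}; its Lyapunov exponent equals $\alpha$ by the relation $\int \log|T'|\,d\mu_{t(\alpha)} = -\frac{d}{dt}P(-t\log|T'|)\big|_{t(\alpha)}$, and its Hausdorff dimension equals $L(\alpha)$ by the standard formula $\dim_H \mu = h(\mu)/\int\log|T'|\,d\mu$ together with the variational principle.

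For case (2), existence of a measure of maximal dimension is equivalent to $\mu_{\dim_H\Lambda}$ being an equilibrium measure for $-\dim_H\Lambda\log|T'|$, whose Lyapunov exponent $\alpha^* = \int \log|T'|\,d\mu_{\dim_H\Lambda}$ is strictly positive. For $\alpha \in (0,\alpha^*]$, the function $t \mapsto P(-t\log|T'|) + t\alpha$ has non-positive left derivative $\alpha - \alpha^*$ at $t = \dim_H\Lambda$ and slope $\alpha > 0$ to the right, so the infimum is attained at $t = \dim_H\Lambda$, giving $L(\alpha) = \dim_H\Lambda$. For $\alpha > \alpha^*$ the infimum lies in the interior branch $(t^*,\dim_H\Lambda)$ and the analysis from case (1) reproduces real analyticity together with the existence of equilibrium measures of full dimension on each level set.

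The main obstacle I anticipate is the identification of the equilibrium measures with \emph{measures of full dimension} in the strict sense, i.e.\ showing $\mu_{t(\alpha)}(J(\alpha)) = 1$ and $\dim_H\mu_{t(\alpha)} = L(\alpha)$. This requires the Birkhoff ergodic theorem applied to $\log|T'|$ (finite under the assumption $-\int\phi\,d\mu < \infty$ from the variational principle), the uniqueness of equilibrium measures from Theorem \ref{termo2}, and a local dimension computation à la Volume Lemma using the tempered distortion property. The boundary behaviour at $t = \dim_H\Lambda$ in case (2), where $\alpha^*$ may equal the infimum of the analytic branch of the spectrum, is where the transition between the two regimes must be handled delicately.
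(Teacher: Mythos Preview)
Your proposal is correct and matches the paper's implicit derivation. The paper does not devote a standalone proof section to this theorem; it is treated as a corollary of the Legendre formula (Theorem~\ref{multi-main}), the piecewise description of the pressure (Theorem~\ref{termo2}), the construction and uniqueness of equilibrium measures in Section~\ref{proof-termo}, and the inflection-point analysis of Section~\ref{i}. Your outline assembles exactly these ingredients, and your identification of the two regimes via the location of the infimum in $t\mapsto P(-t\log|T'|)+t\alpha$ is the right organizing principle.

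One minor divergence worth noting: for $\alpha\in(0,\alpha^*]$ in case~(2) you argue directly from convexity that the infimum sits at $t=\dim_H\Lambda$, giving $L(\alpha)=\dim_H\Lambda$. The paper (in the upper-bound part of the proof of Theorem~\ref{multi-main}) instead observes that the finite-branch approximations already satisfy $\lim_n L_n(\alpha)=\dim_H\Lambda$, so the lower bound matches the trivial upper bound. Both routes are valid; yours is more self-contained, while the paper re-uses the approximation machinery built for the lower bound. As for the obstacle you anticipate concerning $\mu_{t(\alpha)}(J(\alpha))=1$ and $\dim_H\mu_{t(\alpha)}=L(\alpha)$: the paper handles this in Section~7.2 by invoking the Pollicott--Weiss local-dimension estimate $\varliminf_{r\to 0}\log\mu_\alpha(B(x,r))/\log r\le h(\mu_\alpha)/\alpha$ together with standard dimension theory, which is precisely the Volume-Lemma-type argument you had in mind.
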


The Renyi map satisfies the assumptions of  Theorem \ref{para}. Its Lyapunov spectrum has a unique maximum at zero, $L(0)=1$. That is, the set of points for which the Lyapunov exponent is equal to zero has full Hausdorff dimension. The domain of the Lyapunov spectrum is the interval $[0, + \infty)$. It is strictly decreasing and it has an inflection point. Moreover, as in the case of the Gauss map, $\lim_{\alpha \to +\infty} L(\alpha) = 1/2$.

For the infinite Manneville Pommeau map  (with $\beta \in [0,1]$) the Lyapunov spectrum has domain equal to $[0, + \infty)$.  Moreover, $\alpha^*>0$  and for every
$\alpha \in [0, \alpha^*]$ the Lyapunov spectrum has full Hausdorff dimension,
$L(\alpha)=1$. For $\alpha > \alpha^*$ the function $L(\alpha)$ is strictly decreasing and it has an inflection point.

Note that is the influence of the parabolic fixed point that forces the level set of zero to have full Hausdorff dimension. Depending on the tangency of the system $T$ at this point (or equivalently, whether the map admits or not a finite invariant measure absolutely continuous with respect to the $\dim_H(\Lambda)-$conformal measure), the level set corresponding to zero is the only level set of full dimension.

\begin{teo}
If $T$ is an MR-map such that $|(T^m)'(x)|>1$ for some $m>0$ and for every $x \in \Lambda$ then the Lyapunov spectrum has unbounded domain $[\alpha_{\min}, + \infty).$ It is increasing in the interval $[\alpha_{\min}, \alpha^*]$ and decreasing in $[\alpha^*, +\infty)$. It has a unique maximum, it has an inflection point and it is real analytic. There exists measures of full dimension for every level set.
\end{teo}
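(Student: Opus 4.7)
The plan is to read everything off from the Legendre-type representation $L(\alpha)=\frac{1}{\alpha}\inf_{t\in\mathbb{R}}(p(t)+t\alpha)$ of Theorem \ref{multi-main}, where $p(t):=P(-t\log|T'|)$, combined with the strong regularity of $p$ supplied by Theorem \ref{termo1}. Because $p$ is real analytic, strictly convex and strictly decreasing on $(t^*,+\infty)$, its derivative $p'$ is a real analytic strictly increasing map of $(t^*,+\infty)$ onto an open interval; the infimum above is attained at the unique $t(\alpha)$ with $p'(t(\alpha))=-\alpha$, and the implicit function theorem then delivers real analyticity of $\alpha\mapsto t(\alpha)$ and hence of $L(\alpha)=t(\alpha)+p(t(\alpha))/\alpha$ on its whole domain. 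I would identify this domain with $[\alpha_{\min},+\infty)$ by showing that $-p'(t)\downarrow\alpha_{\min}$ as $t\to+\infty$ (the equilibrium measures $\mu_t$ concentrating on orbits of minimal expansion) and that $-p'(t)\to+\infty$ as $t\downarrow t^*$; the latter uses condition (5), which yields periodic measures of arbitrarily large Lyapunov exponent and forces the slope of $p$ to blow up at the critical exponent.

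Monotonicity, the unique maximum and its value come from an envelope-theorem differentiation: using $p'(t(\alpha))=-\alpha$ one obtains $L'(\alpha)=-p(t(\alpha))/\alpha^2$. Bowen's formula identifies $\dim_H(\Lambda)$ as the unique zero of $p$, across which $p$ changes sign from positive to negative; since $t(\cdot)$ is strictly decreasing, $L'$ vanishes exactly at $\alpha=\alpha^*$, is positive for $\alpha<\alpha^*$ and negative for $\alpha>\alpha^*$. This yields $L$ strictly increasing on $[\alpha_{\min},\alpha^*]$, strictly decreasing on $[\alpha^*,+\infty)$, with unique maximum $L(\alpha^*)=\dim_H(\Lambda)$. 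For every $\alpha$ in the domain the equilibrium measure $\mu_{t(\alpha)}$ provided by Theorem \ref{termo1} is the required measure of full dimension: the variational identity gives $\lambda(\mu_{t(\alpha)})=-p'(t(\alpha))=\alpha$, so by ergodicity $\mu_{t(\alpha)}(J(\alpha))=1$, and combining the equilibrium equation $p(t)=h(\mu_t)-t\lambda(\mu_t)$ with $\dim_H\mu_t=h(\mu_t)/\lambda(\mu_t)$ gives $\dim_H\mu_{t(\alpha)}=L(\alpha)$.

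The existence of an inflection point is the most delicate step and I expect it to be the main obstacle. The strategy is to first establish the horizontal asymptote $\lim_{\alpha\to+\infty}L(\alpha)=t^*$; given this, $L'$ vanishes at $\alpha^*$, is strictly negative on $(\alpha^*,+\infty)$ and tends to $0$ as $\alpha\to+\infty$, so $L'$ attains a strict interior minimum at some $\alpha_0\in(\alpha^*,+\infty)$ at which $L''(\alpha_0)=0$ and $L''$ changes sign, producing the inflection point by real analyticity. To establish the asymptote I would write $L(\alpha)=t(\alpha)+p(t(\alpha))/\alpha$ with $t(\alpha)\downarrow t^*$ as $\alpha\to+\infty$, reducing the task to $p(t)/|p'(t)|\to 0$ as $t\downarrow t^*$; this follows by pure convexity from the one-sided inequality $p(t)\le p(t+\varepsilon)+|p'(t)|\varepsilon$, which upon dividing by $|p'(t)|$ and letting first $t\downarrow t^*$ and then $\varepsilon\downarrow 0$ gives the claim both when $p$ blows up at $t^*$ and when it remains finite but with an infinite left-slope.
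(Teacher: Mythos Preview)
Your proposal is correct and follows essentially the same route as the paper, which does not give this theorem its own proof section but derives it implicitly from Theorem~\ref{multi-main}, Theorem~\ref{termo1}, and the computations of Section~8. Your envelope-theorem identity $L'(\alpha)=-p(t(\alpha))/\alpha^{2}$ is exactly what the paper obtains after combining equations \eqref{derivada1} and \eqref{derivada-t} with $h(\mu_\alpha)=p(t_\alpha)+t_\alpha\alpha$, and your use of Bowen's formula to locate the unique maximum matches the Proposition in Section~8.

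The only place where you go slightly beyond the paper is the inflection point. The paper's argument (opening paragraph of Section~\ref{i}) is purely qualitative: $L$ is concave at its maximum, strictly decreasing thereafter, non-negative, and has unbounded domain, so it cannot remain concave on $[\alpha^*,\infty)$ and an inflection point is forced. You instead establish the explicit asymptote $L(\alpha)\to t^{*}$ and argue via an interior minimum of $L'$. Both are valid; yours gives more information (the value of the asymptote), while the paper's version avoids the limit computation $p(t)/|p'(t)|\to 0$. One small caveat: your claim that the equilibrium measure $\mu_{t(\alpha)}$ from Theorem~\ref{termo1} furnishes a measure of full dimension for \emph{every} level set does not cover the endpoint $\alpha_{\min}$, where $t(\alpha)\to+\infty$ and Theorem~\ref{termo1} supplies no equilibrium state; there one must invoke the minimising periodic measure directly (cf.\ Remark~\ref{amin} and \cite{jmu1}). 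The paper is equally silent on this boundary point.
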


\begin{rem} \label{amin}
Note that  $\alpha_{\min} =\min \lbrace \lambda(x) : x \in \Lambda \rbrace$ .
\end{rem}

The Gauss map satisfies the assumptions of the above Theorem.
It was shown by Pollicott and Weiss \cite{pw} and by Kesseb\"ohmer and Stratmann \cite{ks} that the domain of $L(\cdot)$ is the interval $[2 \log \left( \frac{1+\sqrt{5}}{2} \right), \infty)$. It has a maximum at $\alpha^* =\frac{\pi^2}{6 \log 2}$ and $\lim_{\alpha \to + \infty} L(\alpha) = 1/2$.  

\begin{rem}
Let $T$ be the \emph{pathological example} defined in subsection \ref{pathological}.
It is not possible to read from the topological pressure the range of values of the Lyapunov spectrum. Indeed, when differentiable the pressure has always derivative equal to zero. Nevertheless, using the approximation argument of subsection \ref{lower}
it is possible to obtain lower bounds for the multifractal spectrum.
\end{rem}

\section{Inflection points of the spectrum} \label{i}
There is almost no discussion in the literature, that we are aware of,  regarding inflection points for the Lyapunov spectrum. It was only explicitly mentioned in the work of Kesseb\"ohmer and Stratmann \cite{ks} for the Gauss map.  Also Barreira and Saussol \cite{bs1} gave some examples of non-concave multifractal spectra, although not for the  Lyapunov spectrum. In the study of multifractal spectrum of pointwise dimension of certain non dynamically defined measures, non-concave multifractal spectra has been observed \cite{bar, te}.

In the present section we establish general conditions under which the Lyapunov spectrum has inflection points. Note that in our setting the Lyapunov spectrum always has such points. Indeed, it is a continuous function, it is concave at its maximum, it is strictly decreasing for sufficiently large values of $\alpha$, it is non-negative and it has unbounded domain.

Let us denote by $\mu_{\alpha}$ the equilibrium measure corresponding to the potential $-t_{\alpha} \log |T'|$ such that $\int \log |T'| \ d\mu_{\alpha} = \alpha$. Note that it is possible that such a measure does not exist, but if it does then
\[L(\alpha) =\frac{1}{\alpha} \Big(P(t_{\alpha}) +t_{\alpha} \alpha \Big) = \dfrac{h(\mu_{\alpha})}{\alpha}. \]

The next Theorem relates the maximum and the inflection points of the Lyapunov spectrum with the derivative (with respect to the dynamical parameter $\alpha$)
of the entropy of the equilibrium measures.

\begin{teo} \label{max-infl}
Let $$\alpha > \min \lbrace \alpha' \in dom(L) : \textrm{ there exists a measure of full dimension } \mu_{\alpha'} \textrm{ for } J(\alpha') \rbrace.$$ Then 
\begin{enumerate}
\item the point $\alpha$ is a maximum of the Lyapunov spectrum $L(\alpha)$ if and only if 
$$ \frac{d}{d \alpha} h(\mu_{\alpha}) = L(\alpha),$$
\item the point $\alpha$ is an inflection point of the Lyapunov spectrum $L(\alpha)$ if and only if 
$$\frac{d^2}{d \alpha ^2} h(\mu_{\alpha}) =2 L'(\alpha).$$
\end{enumerate}
\end{teo}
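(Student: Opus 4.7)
The starting point is the identity $\alpha L(\alpha) = h(\mu_\alpha)$, already observed in the paragraph preceding the statement. Indeed, if $\mu_\alpha$ is the equilibrium measure for $-t_\alpha \log|T'|$ with $\int \log|T'|\,d\mu_\alpha = \alpha$, then the definition of equilibrium measure gives
$$P(-t_\alpha \log|T'|) = h(\mu_\alpha) - t_\alpha \alpha,$$
and combining this with the variational expression $L(\alpha) = \tfrac{1}{\alpha}(P(-t_\alpha \log|T'|) + t_\alpha \alpha)$ coming from Theorem \ref{multi-main} one obtains $L(\alpha) = h(\mu_\alpha)/\alpha$. The plan is therefore to differentiate the identity $\alpha L(\alpha) = h(\mu_\alpha)$ once and twice in $\alpha$, and to read off the two characterisations.

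Before differentiating, one needs smoothness of the map $\alpha \mapsto h(\mu_\alpha)$ on the range of $\alpha$ for which equilibrium measures of full dimension exist. By Theorem \ref{termo2}(2) (or Theorem \ref{termo1} in the absence of a parabolic point), the pressure $t \mapsto P(-t \log|T'|)$ is strictly convex and real analytic on its effective domain, and the standard identity $\tfrac{d}{dt} P(-t \log|T'|) = -\int \log|T'|\,d\mu_t$ shows that the correspondence $t_\alpha \leftrightarrow \alpha$ is a real analytic diffeomorphism of the corresponding open intervals. Consequently, $h(\mu_\alpha) = P(-t_\alpha \log|T'|) + t_\alpha \alpha$ and $L(\alpha)=h(\mu_\alpha)/\alpha$ are real analytic in $\alpha$ throughout the range under consideration, and the derivatives of $h(\mu_\alpha)$ appearing in the statement make sense.

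Now differentiate $\alpha L(\alpha) = h(\mu_\alpha)$. The first derivative gives
$$L(\alpha) + \alpha L'(\alpha) = \frac{d}{d\alpha} h(\mu_\alpha),$$
so, since $\alpha>0$ on the set in question, $L'(\alpha)=0$ holds if and only if $\tfrac{d}{d\alpha}h(\mu_\alpha) = L(\alpha)$, which is part (1). Differentiating once more,
$$2L'(\alpha) + \alpha L''(\alpha) = \frac{d^2}{d\alpha^2} h(\mu_\alpha),$$
so $L''(\alpha)=0$ holds if and only if $\tfrac{d^2}{d\alpha^2} h(\mu_\alpha) = 2L'(\alpha)$, which is part (2). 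The only real content beyond this bookkeeping is the smoothness of $\alpha \mapsto h(\mu_\alpha)$, which is where the main obstacle lies; however, the analyticity of the pressure provided by Theorem \ref{termo2} (resp.\ \ref{termo1}) and the non-degeneracy of $P''$ (strict convexity) reduce this to routine application of the implicit function theorem, so the argument is straightforward once the framework of the previous paragraph is in place.
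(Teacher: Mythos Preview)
Your proposal is correct and follows essentially the same route as the paper. Both arguments rest on the identity $L(\alpha)=h(\mu_\alpha)/\alpha$ and amount to differentiating it twice; the paper uses the quotient form and obtains $L'(\alpha)=\alpha^{-2}\bigl(\alpha\,\tfrac{d}{d\alpha}h(\mu_\alpha)-h(\mu_\alpha)\bigr)$ and the analogous second-derivative expression, whereas you differentiate the product form $\alpha L(\alpha)=h(\mu_\alpha)$, which gives the same equivalences with slightly less bookkeeping. Your explicit justification of the smoothness of $\alpha\mapsto h(\mu_\alpha)$ via the analyticity and strict convexity of the pressure is a welcome addition that the paper leaves implicit.
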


\begin{rem}   For \emph{Renyi like} and \emph{infinite Manneville Pomeau like} maps we have that
\[\alpha^*=\min \lbrace \alpha' \in dom(L) : \textrm{there exists a measure of full dimension } \mu_{\alpha'} \textrm{ for } J(\alpha') \rbrace, \]
where the number $\alpha^*$ was defined in equation \eqref{estrella}.
For \emph{Gauss like} maps we have that
\[ \alpha_{\min} = 
\min \lbrace \alpha' \in dom(L) : \textrm{there exists a measure of full dimension } \mu_{\alpha'} \textrm{ for } J(\alpha') \rbrace, \]
where $\alpha_{\min}$ was defined in Remark \ref{amin}.
\end{rem}

The next Theorem gives a bound for where an inflection point can be situated.

\begin{teo} \label{ubic-infl}
Every inflection point is larger than $\alpha^*$.
\end{teo}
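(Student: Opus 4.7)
My plan is to use the Legendre-type formula of Theorem \ref{multi-main} to obtain a closed-form expression for $L''(\alpha)$ and then argue case by case that $L''$ cannot change sign at any point $\alpha \leq \alpha^*$. Write $F(t) := P(-t \log |T'|)$. Whenever the infimum in $L(\alpha) = \frac{1}{\alpha}\inf_t (F(t) + t\alpha)$ is attained at an interior critical point $t_\alpha$ in the strict-convexity regime of $F$, the first-order condition $F'(t_\alpha) = -\alpha$ together with the fact that $F$ is strictly convex and strictly decreasing on $(t^*,\dim_H\Lambda)$ (and, in the Gauss-like case, on all of $(t^*,\infty)$) forces $\alpha \mapsto t_\alpha$ to be strictly decreasing, with $t_\alpha = \dim_H\Lambda$ corresponding exactly to $\alpha = \alpha^*$.

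A direct Legendre calculation, setting $G(\alpha) := \alpha L(\alpha) = F(t_\alpha) + t_\alpha \alpha$, gives $G'(\alpha) = t_\alpha$ (the derivative in $t_\alpha$ cancels by the critical-point equation) and $G''(\alpha) = t_\alpha' = -1/F''(t_\alpha)$. Dividing $G$ by $\alpha$ and differentiating twice then produces
\[
L''(\alpha) = \frac{2 F(t_\alpha)}{\alpha^3} - \frac{1}{\alpha\, F''(t_\alpha)}.
\]
The second summand is strictly negative by strict convexity of $F$, so the sign of $L''$ is controlled by $F(t_\alpha)$.

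For a Gauss-like map and any $\alpha \leq \alpha^*$ one has $t_\alpha \geq \dim_H\Lambda$; since $F$ vanishes at $\dim_H\Lambda$ (Bowen's formula for the repeller) and is strictly decreasing, $F(t_\alpha) \leq 0$. Hence both summands are non-positive and $L''(\alpha) < 0$ strictly throughout $[\alpha_{\min}, \alpha^*]$, which rules out any inflection point in that sub-interval. For an infinite Manneville--Pomeau-like map, Theorem \ref{para}(2) gives $L(\alpha) \equiv \dim_H\Lambda$ on $[0,\alpha^*]$, so $L'' \equiv 0$ with no sign change. For a Renyi-like map $\alpha^* = 0$ sits on the boundary of the domain and the conclusion is vacuous.

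The subtle point is the transition at $\alpha = \alpha^*$ in the parabolic regime, where $L$ is not $C^2$: its left derivative is zero (plateau) while its right derivative is strictly negative, reflecting the non-differentiability of $F$ at $t = \dim_H\Lambda$. Hence $\alpha^*$ is a corner of $L$, not an inflection point. Combining the three cases gives that every inflection point of $L$ is strictly greater than $\alpha^*$.
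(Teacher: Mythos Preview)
Your approach is essentially the same as the paper's. The paper also reduces the question to the identity
\[
L''(\alpha)=0 \iff P(-t_\alpha\log|T'|)=-\tfrac{\alpha^2}{2}\,t_\alpha'
\]
(its Lemma~\ref{con}), which is exactly your closed form $L''(\alpha)=\tfrac{2F(t_\alpha)}{\alpha^3}+\tfrac{t_\alpha'}{\alpha}$ set to zero; it then runs the identical three-case analysis. Your Legendre bookkeeping via $G(\alpha)=\alpha L(\alpha)$ is a little cleaner than the paper's route through $h(\mu_\alpha)$ and equation~\eqref{derivada-t}, but the substance is the same.

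One inaccuracy to fix: in the infinite Manneville--Pomeau case the spectrum is \emph{not} a corner at $\alpha^*$. Since $L'(\alpha)=\bigl(t_\alpha-L(\alpha)\bigr)/\alpha$ and both $t_\alpha\to\dim_H\Lambda$ and $L(\alpha)\to\dim_H\Lambda$ as $\alpha\downarrow\alpha^*$, the right derivative of $L$ at $\alpha^*$ is $0$, matching the left derivative from the plateau; so $L$ is $C^1$ there. What fails is $C^2$: $L''$ jumps from $0$ on the plateau to a strictly negative value on the right (your own formula gives $L''(\alpha^{*+})=-1/\bigl(\alpha^* F''(\dim_H\Lambda^-)\bigr)$ since $F(t_\alpha)\to 0$). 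This still rules out $\alpha^*$ as an inflection point (no sign change of $L''$), so your conclusion stands, but the stated reason should be corrected.
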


Note that it is possible for $ \alpha^*=0$. This occurs, for example, for the Renyi map.

\section{Proof of Theorems \ref{termo2} and \ref{termo1}, thermodynamic formalism.} \label{proof-termo}

We start by proving Theorem \ref{termo2}. We introduce a countable Markov shift that can be thought of as generalisation of the renewal shift (see \cite{sa2}). Our proof is a combination of techniques coming from the theory of countable Markov shifts with properties of interval maps. Several of our arguments are of an approximation nature.

\begin{lema}
There exists $s \geq 0$ such that for every $t<s$  the pressure function is infinity, that is $$P(-t \log |T'|) =\infty.$$  
\end{lema}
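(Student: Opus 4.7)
The plan is to pin down the threshold $s$ by producing a sequence of invariant probability measures on uniformly hyperbolic sub-repellers whose free energy blows up. I would take $s := 1/\gamma$, where $\gamma > 1$ is the exponent from condition (5). For any $t < s$ one has $\gamma t < 1$, so the series $\sum_{i \geq 2} i^{-\gamma t}$ diverges; this is really the only analytic ingredient needed.

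For each $N \geq 2$ consider the sub-repeller
\[ \Lambda_N := \bigcap_{k \geq 0} T^{-k}\Bigl( \bigcup_{i=2}^{N} I_i \Bigr). \]
Because $\Lambda_N$ is disjoint from a neighbourhood of the parabolic point $p$ (which, after relabelling, we may assume lies in $I_1$), condition (2) gives that $T$ is uniformly expanding on $\Lambda_N$; so $\Lambda_N$ is a compact hyperbolic repeller symbolically conjugate to the full shift on $\{2, \ldots, N\}$. Set $Z_N := \sum_{i=2}^{N} i^{-\gamma t}$ and let $\mu_N$ be the Bernoulli measure on this shift with weights $p_i := i^{-\gamma t}/Z_N$, pulled back to a $T$-invariant probability measure supported on $\Lambda_N \subset \Lambda$.

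Using condition (5) we have $\log |T'(x)| = \gamma \log i + O(1)$ uniformly for $x \in I_i$, with the error bounded by $\log C$. A direct computation then yields
\[ h(\mu_N) - t \int \log |T'|\, d\mu_N = \Bigl( \gamma t \sum_i p_i \log i + \log Z_N \Bigr) - t \Bigl( \gamma \sum_i p_i \log i + O(1) \Bigr) = \log Z_N + O(1), \]
where the two $\sum_i p_i \log i$ terms cancel and the remaining $O(1)$ is uniform in $N$. By the variational principle, $P(-t \log |T'|) \geq \log Z_N + O(1)$, which tends to $+\infty$ as $N \to \infty$. Hence $P(-t \log |T'|) = \infty$, as required. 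The only point to watch carefully is that the distortion error arising from $|T'|$ not being exactly $i^\gamma$ on $I_i$ contributes only a bounded amount to the free energy, independently of $N$; this is immediate from condition (5) because only length-one cylinders enter the Bernoulli computation, so the tempered distortion hypothesis is not even needed here.
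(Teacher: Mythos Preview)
Your argument is correct, but it is considerably more hands-on than what the paper actually does. The paper's proof is a three-line soft argument: the map $t \mapsto P(-t\log|T'|)$ is non-increasing, it equals $+\infty$ at $t=0$ because the system has infinite topological entropy, and by Bowen's formula it is non-positive (hence finite) for $t>\dim_H(\Lambda)$. These facts together force the existence of a threshold $s\in[0,\dim_H(\Lambda)]$.

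Your route is genuinely different: you build explicit Bernoulli measures on the uniformly hyperbolic sub-repellers $\Lambda_N$ and use condition~(5) to see that their free energy is $\log Z_N + O(1)$ with $Z_N=\sum_{i=2}^{N} i^{-\gamma t}\to\infty$ when $\gamma t<1$. This is correct and in fact yields more than the lemma asks, namely the quantitative bound $s\ge 1/\gamma>0$. It also essentially pre-empts the paper's \emph{next} lemma, whose proof uses the very same inequality $P(-t\log|T'|)\ge \log\sum_{n} n^{-t\gamma}$ to show that the pressure blows up as $t\downarrow t^*$. So what you gain is an explicit lower bound on the threshold and a single argument covering both lemmas; what the paper gains is brevity, since monotonicity plus $P(0)=\infty$ already suffices for the bare existence statement.
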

\begin{proof}
Note that pressure function $ t \to P(-t \log |T'|)$ is non-increasing. Since the system has
infinite entropy we have that $P(0) =\infty$.  By the Bowen formula (see \cite{gr1, pe}) we obtain that
for every $t >\dim_H(\Lambda)$ the pressure function is non positive. The result now follows.
\end{proof}
Denote by $t^*=\sup \left\{ s \in \mathbb{R} : \textrm{ if } t<s \textrm{ then } P(-t \log |T'|) =\infty \right\}$. Note that $t^* \leq \dim_H(\Lambda)$.

\begin{lema}
The pressure function is such that 
\begin{equation*}
\lim_{t \to {t^{*}}}  P(-t \log |T'|) =\infty
\end{equation*}
\end{lema}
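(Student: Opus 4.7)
The content of the lemma is the right limit, $\lim_{t\downarrow t^{*}}P(-t\log|T'|)$, since the left limit equals $+\infty$ by the very definition of $t^{*}$; note also that convexity and monotonicity alone do not force this right limit to be $+\infty$, since convex functions may take a finite value at the boundary of their finiteness domain. My plan is to combine a Bernoulli-type lower bound, obtained from finite sub-systems that avoid the parabolic fixed point, with the Gurevich pressure upper bound arising from the full-shift symbolic model of $T$, in order to identify $t^{*}$ with the critical exponent of the series $\sum_{n}n^{-\gamma t}$, and then conclude by monotone convergence.

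Assume without loss of generality that $p\in I_{1}$. For each $N\geq 2$ set $\Lambda_{N}=\bigcap_{k\geq 0}T^{-k}\bigl(\bigcup_{n=2}^{N}I_{n}\bigr)$; by condition $(4)$, $T|_{\Lambda_{N}}$ is conjugate to the full shift on $N-1$ symbols, and by $(2)$ it is uniformly expanding. For the probability vector $p_{n}\propto n^{-\gamma t}$ on $\{2,\ldots,N\}$, the associated Bernoulli measure $\mu$ has entropy $-\sum p_{n}\log p_{n}$; condition $(5)$ together with the bounded distortion on $1$-cylinders (the $n=1$ instance of the tempered distortion property) yields $\int\log|T'|\,d\mu\leq \gamma\sum p_{n}\log n+O(1)$. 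Substituting into the variational principle and letting $N\to\infty$ gives
\[
P(-t\log|T'|)\;\geq\;\log\sum_{n=2}^{\infty}n^{-\gamma t}\,-\,O(1),
\]
whose right-hand side is $+\infty$ exactly when $\gamma t\leq 1$; hence $t^{*}\geq 1/\gamma$.

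Conversely, applying the Gurevich pressure formula of Sarig to the full-shift symbolic model furnished by condition $(4)$ yields the matching upper bound $P(-t\log|T'|)\leq \log\sum_{n}n^{-\gamma t}+O(1)$, which is finite for $\gamma t>1$; so $t^{*}\leq 1/\gamma$. Thus $t^{*}=1/\gamma$ and in particular $P(-t^{*}\log|T'|)\geq \log\sum_{n}1/n=+\infty$. For any sequence $t_{k}\downarrow t^{*}$, monotone convergence of the positive-term series $\sum_{n}n^{-\gamma t_{k}}\to\sum_{n}1/n=+\infty$ gives $P(-t_{k}\log|T'|)\to +\infty$, proving the lemma. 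The main technical obstacle is the clean handling of the tempered distortion in the Gurevich pressure upper bound: the $n$-cylinder distortion factor $\exp(n\rho_{n})$ with $\rho_{n}\to 0$ must be absorbed into a uniform error on the logarithmic free energy, which is standard in Sarig's theory of countable Markov shifts but requires some care in the present non-Hölder setting.
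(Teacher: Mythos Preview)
Your argument is correct and is in fact more complete than the paper's own proof. The paper writes only one line: hypothesis~(5) gives $P(-t\log|T'|)\ge\log\sum_{n\ge 1}n^{-\gamma t}$, and then ``the result now follows.'' That lower bound alone shows $t^{*}\ge 1/\gamma$ and that the right-hand side diverges as $t\downarrow 1/\gamma$; the conclusion then needs $t^{*}=1/\gamma$, which the paper leaves implicit. You close that gap by supplying the matching upper bound and explicitly identifying $t^{*}=1/\gamma$, which is the rigorous way to finish.

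One simplification worth noting: your upper bound does not require the Gurevich machinery or any regularity of the potential beyond the paper's variational definition of $P$. For any $\mu\in\mathcal{M}_T$ the partition $\{I_n\}$ is generating, so $h(\mu)\le H_\mu(\{I_n\})=-\sum_n\mu(I_n)\log\mu(I_n)$; combining this with the lower bound $\log|T'|\ge\log(C^{-1}n^{\gamma})$ on $I_n$ from~(5) and optimising over probability vectors via Gibbs' inequality gives
\[
h(\mu)-t\int\log|T'|\,d\mu\;\le\;\log\sum_{n\ge 1} n^{-\gamma t}+t\log C,
\]
hence $P(-t\log|T'|)\le\log\sum_{n} n^{-\gamma t}+O(1)$ directly from the variational principle. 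This sidesteps the tempered-distortion obstacle you flag at the end entirely.
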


\begin{proof}
From hypothesis $(5)$ on the definition of MR-maps we obtain that
\[P(-t \log|T'|) \geq \log \sum_{n=1}^{\infty} n^{-t \gamma}. \]
The result now follows.
\end{proof}

Note that the pathological example of subsection \ref{pathological} does not satisfy the above Lemma. 

\subsection{Equilibrium measures}
We discuss now the existence of equilibrium measures for the potentials $-t \log |T'|$ with $t \in (t^*, \dim_H(\Lambda))$.

Denote by $T_n$  the restriction of the map $T$ to the set $\cup_{i=1}^{n} I_i$ and let $\Lambda_n$ be the corresponding repeller. Let $P_n(\cdot)$ be the topological pressure corresponding to the dynamical system $T_n$.


Let $\alpha >  \alpha^*$ be fixed. Denote by $\mu_n$ the equilibrium measure for the dynamical system $T_n$ corresponding to a potential of the form $-t_n \log |T_n '|$, such that
\[ \int \log |T'| \ d\mu_n = \alpha. \]
Such a measure exists for every $n$ sufficiently large. This is due to the fact that for $t> t^*$ we have $P(-t \log|T'|) < \infty$ and (see \cite{sa1})
\[ \lim_{n \to \infty} P_n(-t \log |T'|) = P(-t \log |T'|). \]
We first show that this sequence has a limit point in $\mathcal{M}_T$ (note that the mass could escape from the domain of the system, as it does for the sequence $\lbrace \delta_p : p \textrm{ fixed  point for } R \rbrace$, where $\delta_p$ denotes the atomic measure supported in $\lbrace p \rbrace$). That is, we rule out the possibility of mass escaping the domain of the system. The following proof works for any accumulation point 
of the boundary of the partition. For simplicity we will assume that point to be equal to $1$.

\begin{lema}
Let $\alpha>\alpha^*$ then the  sequence $\{ \mu_n \}$ is tight in $[0,1)$.
\end{lema}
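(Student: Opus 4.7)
The plan is to control mass escape to the right endpoint purely via the prescribed Lyapunov constraint $\int \log|T'|\,d\mu_n = \alpha$ together with condition (5), which ties $|T'|$ on $I_k$ to the index $k$. Since we are assuming the accumulation point of the partition boundary is $1$, the geometric ingredient I will need is the elementary fact that for every $\delta>0$ only finitely many $I_k$ meet $[0,1-\delta]$, so there exists $N(\delta)$ with $N(\delta)\to\infty$ as $\delta\to 0^+$ such that $I_k\subset[1-\delta,1)$ for every $k\geq N(\delta)$.

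First, I would use condition (5) to write $\log|T'(x)|\geq \gamma\log k-\log C$ on $I_k$. Integrating against $\mu_n$ and using the hypothesis $\int\log|T'|\,d\mu_n=\alpha$ gives
\begin{equation*}
\alpha \;=\; \int\log|T'|\,d\mu_n \;\geq\; \gamma\sum_{k=1}^{n}\mu_n(I_k)\log k \;-\;\log C,
\end{equation*}
and therefore the uniform bound
\begin{equation*}
\sum_{k=1}^{n}\mu_n(I_k)\log k \;\leq\; M:=\frac{\alpha+\log C}{\gamma}
\end{equation*}
holds for every $n$ large enough that $\mu_n$ is defined.

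Second, I apply a Chebyshev-type estimate together with the geometric fact above. For any $\delta>0$,
\begin{equation*}
\mu_n\bigl([1-\delta,1)\bigr) \;\leq\; \sum_{k\geq N(\delta)}\mu_n(I_k) \;\leq\; \frac{1}{\log N(\delta)}\sum_{k\geq N(\delta)}\mu_n(I_k)\log k \;\leq\; \frac{M}{\log N(\delta)}.
\end{equation*}
Given $\varepsilon>0$, choose $\delta>0$ so small that $M/\log N(\delta)<\varepsilon$; then $K=[0,1-\delta]$ is a compact subset of $[0,1)$ with $\mu_n(K)\geq 1-\varepsilon$ for all $n$, which is tightness in $[0,1)$.

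I do not expect a serious obstacle here: the requirement $\alpha>\alpha^{*}$ is what ensures the sequence $(t_n)$ and hence $(\mu_n)$ exists from some $n$ onward (via the assumed convergence $P_n\to P$ and the behaviour of the pressure derivative), and once that is granted the uniform first-moment estimate on $\log k$ coming from (5) is exactly what prevents escape to the accumulation point. The only thing that needs a word of justification is the geometric step identifying high-index cylinders with a small neighbourhood of the accumulation point, but this is immediate from the disjointness of the $I_k$ together with the assumption singled out in the sentence preceding the lemma.
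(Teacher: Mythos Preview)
Your proof is correct and follows essentially the same strategy as the paper's own argument: both exploit the fixed Lyapunov constraint $\int\log|T'|\,d\mu_n=\alpha$ together with the divergence of $\log|T'|$ near the accumulation point $1$ to bound $\mu_n$-mass near $1$ uniformly in $n$. The paper phrases this via $M_\epsilon:=\inf_{x\in(1-\epsilon,1)}\log|T'(x)|\to\infty$ and the direct estimate $\mu_n([1-\epsilon,1])\le\alpha/M_\epsilon$, while you unpack the same mechanism through condition~(5), the partition indices, and a Chebyshev step; the difference is purely cosmetic.
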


\begin{proof}
We have that
\[ \alpha =\int_0^1 \log |T'| \ d\mu_n = \int_0^{1- \epsilon} \log |T'| \ d\mu_n  + \int_{1-\epsilon}^1 \log |T'| \ d\mu_n. \]
For every $n \in \mathbb{N}$ there exists $\epsilon >0$ such that
\[ M_{\epsilon}:= \inf \lbrace |\log|T'(x)||  : x \in (1-\epsilon,1) \rbrace \geq n. \]
Let us fix $\epsilon>0$. We have that
\[ \alpha =\int_0^{1- \epsilon} \log |T'| \ d\mu_n  + \int_{1-\epsilon}^1 \log |T'|  \ d\mu_n \geq  \int_0^{1- \epsilon} \log |T'| \ d\mu_n + M_{\epsilon} \mu_n([1-\epsilon, 1]) .\]
Thus,
\[ \dfrac{\alpha - \int_0^{1- \epsilon} \log |T'| \ d\mu_n}{M_{\epsilon}} \geq
\mu_n([1-\epsilon, 1]) .\]
Since $\int_0^{1- \epsilon} \log |T'| \ d\mu_n \geq 0$,  we obtain that
\[\mu_n([1-\epsilon, 1]) \leq \dfrac{\alpha}{M_{\epsilon}}. \]
Therefore,  for every $\delta>0$ there exists $\epsilon(\delta) >0$ such that
$\mu_n([0, 1-\epsilon(\delta)]) > 1 - \delta$.
That is, the sequence $\lbrace \mu_n \rbrace$ is tight. Therefore, if  $\mu_{\alpha}$        is a weak$^*$ accumulation point of the sequence $\lbrace \mu_n \rbrace$ then  $\mu_{\alpha}([0,1))=1$.
\end{proof}

\begin{rem}
Note that the assumption $\alpha > \alpha^*$ is equivalent to $t \in (t^*, \dim_H(\Lambda))$.
\end{rem}

We have proved that given $\alpha > \alpha^*$ there exists a sequence $\lbrace t_n       \rbrace \in \mathbb{R}$ such that the equilibrium measures, $\mu_n$, for $-t_n \log|T'|$ restricted to $\Lambda_n$  has an accumulation point  $\mu_{\alpha}$.  For sufficiently large values of $n$ we have that $t_n \in (t^*, \dim_H(\Lambda))$. In particular it has a 
convergent subsequence. Let us denote by $t_{\alpha}$ such an accumulation point.

\begin{lema}
The measure $\mu_{\alpha}$ is an equilibrium measure for $-t_{\alpha} \log|T'|$.
\end{lema}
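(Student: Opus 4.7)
The plan is to verify the variational equality $P(-t_\alpha\log|T'|)=h(\mu_\alpha)-t_\alpha\int\log|T'|\,d\mu_\alpha$ by passing to the limit in the equilibrium identity for the finite subsystems $T_n$. The inequality $P(-t_\alpha\log|T'|)\geq h(\mu_\alpha)-t_\alpha\int\log|T'|\,d\mu_\alpha$ is automatic from the variational principle, so all the work lies in the matching lower bound.

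The starting point is the equilibrium property of $\mu_n$ on $\Lambda_n$ for $-t_n\log|T'|$, combined with the normalisation $\int\log|T'|\,d\mu_n=\alpha$, which yields
\[
h(\mu_n)=P_n(-t_n\log|T'|)+t_n\alpha.
\]
The proof then reduces to three convergence statements: (i) $P_n(-t_n\log|T'|)\to P(-t_\alpha\log|T'|)$; (ii) $\int\log|T'|\,d\mu_\alpha=\alpha$; and (iii) upper semi-continuity of entropy along the sequence, $h(\mu_\alpha)\geq\limsup_n h(\mu_n)$. These together give
\[
h(\mu_\alpha)-t_\alpha\int\log|T'|\,d\mu_\alpha\geq\lim_n\bigl(h(\mu_n)-t_n\alpha\bigr)=P(-t_\alpha\log|T'|),
\]
which combined with the variational inequality produces the desired equality.

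For (i) I would combine Sarig's approximation $P_n(-t\log|T'|)\to P(-t\log|T'|)$ at each fixed $t$ with the fact that the sequence is monotone in $n$ and the limiting pressure is continuous (indeed real analytic on $(t^*,\dim_H(\Lambda))$ by Theorem~\ref{termo2}); Dini's theorem then gives locally uniform convergence, which together with $t_n\to t_\alpha$ yields the joint limit. For (ii) I would split $\int\log|T'|\,d\mu_n$ at $1-\epsilon$: on $[0,1-\epsilon]$ the integrand is bounded and continuous off the finitely many partition endpoints (which are carried by finitely many preimages of $0$ under $T$ and hence have $\mu_\alpha$-measure zero), so weak$^*$ convergence gives the limit; the tail on $[1-\epsilon,1)$ is controlled uniformly by combining the tightness estimate $\mu_n([1-\epsilon,1])\leq\alpha/M_\epsilon$ with the polynomial growth $|T'|\leq Cn^\gamma$ on $I_n$ from hypothesis~(5), which provides uniform integrability of $\log|T'|$ under the $\mu_n$.

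The principal obstacle is (iii). For systems of finite topological entropy with a generating partition of finite entropy the classical Misiurewicz argument yields upper semi-continuity, but our system has infinite entropy and a countable generating partition, so the statement is delicate and can fail in general. I would handle this by truncation: working on the symbolic model introduced earlier in the paper, write $h(\mu_n)=h(\mu_n,\xi_N)+\bigl(h(\mu_n)-h(\mu_n,\xi_N)\bigr)$ where $\xi_N=\{I_1,\ldots,I_N,\,\bigcup_{k>N}I_k\}$ is a finite partition. On $\xi_N$ the Misiurewicz argument applies and gives $\limsup_n h(\mu_n,\xi_N)\leq h(\mu_\alpha,\xi_N)\leq h(\mu_\alpha)$, while a standard Shannon estimate bounds the defect $h(\mu_n)-h(\mu_n,\xi_N)$ by a quantity depending only on the tail mass $\mu_n(\bigcup_{k>N}I_k)$, which tightness makes uniformly small as $N\to\infty$. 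An alternative, quicker route is to invoke the upper semi-continuity results for countable Markov shifts of Buzzi and Sarig, which are already employed elsewhere in the paper.
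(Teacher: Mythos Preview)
Your strategy is the same as the paper's: both argue via the approximating subsystems $T_n$, using Sarig's result $P_n(-t\log|T'|)\to P(-t\log|T'|)$ together with monotonicity in $n$ to obtain $P_n(-t_n\log|T'|)\to P(-t_\alpha\log|T'|)$, and then pass to the limit in the identity $h(\mu_n)-t_n\alpha=P_n(-t_n\log|T'|)$. The paper's proof is in fact considerably terser than yours: after establishing the pressure limit it simply writes $\lim_n(h(\mu_n)-t_n\alpha)=h(\mu_\alpha)-t_\alpha\alpha=P(-t_\alpha\log|T'|)$ and stops, without isolating or justifying your steps (ii) and (iii). So your proposal is a fleshed-out version of the same argument.

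One small correction to your sketch of (iii): the bound on the defect $h(\mu_n)-h(\mu_n,\xi_N)$ cannot depend \emph{only} on the tail mass $\mu_n\bigl(\bigcup_{k>N}I_k\bigr)$, since for a countable partition the conditional entropy $H_{\mu_n}(\{I_k\}\mid\xi_N)$ also involves how that mass is distributed among the $I_k$. What makes the truncation work here is the additional uniform control $\sum_k \mu_n(I_k)\log k\le (\alpha+\log C)/\gamma$, which follows from hypothesis~(5) together with the normalisation $\int\log|T'|\,d\mu_n=\alpha$; this is precisely the same ingredient you already use for the uniform integrability in (ii).
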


\begin{proof}
Recall that results  by Sarig \cite{sa1} (see also the work of  Mauldin and Urba\'nski \cite{mu}) imply that
\[ \lim_{n \to \infty} P_n(-t \log |T'|) = P(-t \log |T'|). \]
Note that the pressure is monotonous on $n$, that is, for every continuous potential $\phi:\Lambda \to \mathbb{R}$ we have
\[ P_n(\phi) \leq P_{n+1}(\phi). \]
In particular, we have that
\[ \lim_{n \to \infty} P_n(-t_n \log |T'|) = P(-t_{\alpha} \log |T'|). \]
From our definition of pressure we obtain
\[\lim_{n \to \infty} \big( h(\mu_n) -t_n \alpha \big) =h(\mu_{\alpha}) - t_{\alpha} \alpha =  P(-t_{\alpha} \log |T'|).\]
\end{proof}

\begin{rem}
If there exists $\alpha_1 \neq \alpha_2$ such that the corresponding sequences
$\lbrace t_n^1 \rbrace$ and $\lbrace t_n ^2 \rbrace$ converge to the same value
$t'$, then the pressure function $t \to P(-t \log |T'|)$ is not differentiable at  $t'$.
\end{rem}

\subsection{The symbolic model: $N$-renewal shift.}

In order to prove that the equilibrium measure is unique, we will make use of the theory of countable Markov shifts.  First note that for every $\alpha > \alpha^*$ the Dirac measure supported at the fixed point zero, $\delta_0$, is not an equilibrium measure for $-t_{\alpha} \log |T'|$.
We will remove the fixed point $x=0$ and use a symbolic model to describe the system $T$ restricted to $(0,1)$.

Let $S =\lbrace 0,1 ,2 ,3 ,\dots \rbrace$ be a countable alphabet. Consider the transition matrix $A_1=(a_{ij})_{i,j \in S}$ with $a_{0,0}=a_{0,n}=a_{n,n-1}=1$ for each $n \geq 1$ and with all the other entries equal to zero. The \emph{renewal shift} is the countable Markov shift $(\Sigma_1, \sigma)$ defined by the transition matrix $A_1$, that is, the shift map $\sigma$ acting on the space
\[ \Sigma_1= \lbrace (x_i)_{i \geq 0}: x_i \in S \textrm{ and } a_{x_{i} x_{i+1}}= 1
\textrm{ for each } i \geq 0 \rbrace. \]
The vertex determined by the symbol $0$ is called \emph{renewal vertex}.
Interval maps with two branches, either hyperbolic \cite{pz} or with a parabolic fixed point \cite{sa1}, have been studied with this symbolic model. Also in dimension two, parabolic horseshoes have been studied with this model \cite{bi}. We will consider a generalisation of the the renewal shift that would allow us to model systems with more than two branches.  The idea is to replace the renewal vertex with a full-shift on $N$ symbols.

Let $N$ be a positive integer. Consider the transition matrix $A_N=(a_{ij})_{i,j \in S}$ with $a_{0,0}=a_{0,n}=1, a_{1,0}=a_{1,n}=1, \dots, a_{N,0}= a_{N,n}=1$ for each $n \geq 1$. Also  $a_{N+1,0}=a_{N+1,1}= \cdots= a_{N+1,N}=1$ and
$a_{n+2,n+1}=1$ for each $n \geq N+2$. 
All the other entries are equal to zero. The \emph{N-renewal shift} is the countable Markov shift $(\Sigma_N, \sigma)$ defined by the transition matrix $A_N$, that is, the shift map $\sigma$ acting on the space
\[ \Sigma_N= \lbrace (x_i)_{i \geq 0}: x_i \in S \textrm{ and } a_{x_{i} x_{i+1}}= 1
\textrm{ for each } i \geq 0 \rbrace. \]
These Markov shifts serve as symbolic models for the systems $T_n$, when we remove the parabolic fixed point and its pre-images.
\begin{prop}
The map $T_n: \Lambda_n \setminus \cup_{i=0}^{\infty} T_n^{-i}(0) \to \Lambda_n \setminus \cup_{i=0}^{\infty} T_n^{-i}(0)$ is topologically conjugated to
$(\Sigma_n, \sigma)$. 
\end{prop}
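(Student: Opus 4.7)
The plan is to exhibit an explicit countable Markov partition of $\Lambda_n \setminus \bigcup_{i \geq 0} T_n^{-i}(0)$ whose combinatorics match the matrix $A_n$, and then check that the associated itinerary map is a topological conjugacy.

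First I decompose the parabolic interval $I_1$ into fundamental domains. Let $r_0$ denote the endpoint of $I_1$ opposite to $0$. Since $T|_{I_1}$ is a homeomorphism onto $[0,1]$ by hypothesis (4) and is strictly expanding away from $0$ by hypotheses (2)--(3), one inductively defines $r_k \in I_1$ as the unique preimage of $r_{k-1}$ under this branch; the sequence $r_k$ is strictly decreasing to $0$. The half-open intervals $F_k = (r_k, r_{k-1}]$ together with $\{0\}$ partition $I_1$, and the Markov partition is obtained by intersecting $\Lambda_n \setminus \bigcup T_n^{-i}(0)$ with each of $I_2, \ldots, I_n$ and with each $F_k$. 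Reading off the resulting transitions: each branch $T|_{I_j}$ for $j \geq 2$ is onto $[0,1]$, so from those labels any partition element is reachable in one step; $T(F_k) = F_{k-1}$ for $k \geq 2$, giving a deterministic chain; and $T(F_1) = (r_0, 1] \supseteq I_2 \cup \cdots \cup I_n$, so $F_1$ transitions only to hub symbols. A direct match with the definition of $A_n$ identifies these combinatorics with those of the $n$-renewal shift.

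Define the itinerary map $\pi \colon \Sigma_n \to \Lambda_n \setminus \bigcup T_n^{-i}(0)$ by sending an admissible sequence $(x_i)_{i \geq 0}$ to the unique point whose $i$-th iterate lies in the partition element labelled by $x_i$. Equivariance $\pi \circ \sigma = T_n \circ \pi$ is built into the definition. The remaining task, and the principal technical obstacle, is to verify bijectivity and bicontinuity, both of which reduce to showing that the cylinder sets $C_k = \bigcap_{i=0}^{k-1} T_n^{-i}(E_{x_i})$ have diameters tending to $0$. Admissibility in $\Sigma_n$ prevents infinite chain tails, so every itinerary visits the hubs infinitely often; hypothesis (2), applied on the compact part of $\Lambda_n$ outside a neighbourhood of $0$, yields uniform expansion at each such visit. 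A chain passage of depth $k$ contributes a net expansion factor of at least $|F_0|/|F_k|$, because $T^k$ maps $F_k$ onto the bounded-size hub $F_0 = T(F_1)$, and this factor tends to infinity with $k$. The tempered distortion property finally converts these pointwise derivative bounds into uniform diameter estimates, giving $\mathrm{diam}(C_k) \to 0$ and completing the argument.
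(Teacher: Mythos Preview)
Your argument is correct and is exactly the classical construction the paper alludes to; the paper itself offers no proof beyond the remark that it ``is just a slight modification of the classical result for interval maps with two branches.'' One small caution on bookkeeping: your partition produces $n-1$ hub symbols (the non-parabolic branches $I_2,\dots,I_n$) together with the entry level $F_1$ and the chain $F_2,F_3,\dots$, whereas the matrix $A_N$ as written in the paper has $N+1$ hub symbols $0,\dots,N$; the combinatorial structures match after relabelling, but the literal identification of indices in the statement carries an offset that originates in the paper's definitions rather than in your argument.
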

The proof of this result is straightforward and it is just a slight modification of the classical result for interval maps with two branches (see \cite{bi, pz, sa2}).
Denote by $\pi_n: \Sigma_n \to \Lambda_n \setminus \cup_{i=0}^{\infty} T_n^{-i}(0)$ the topological conjugacy.

Let us consider now the countable Markov shift defined by the transition matrix $A_{\infty}=(a_{i,j})_{i,j \in S}$ with entries $a_{2n,k}=1$,   $a_{2n-1,2n-2}=1$ and
$a_{1,2n}=1$, for every $n,k \in \mathbb{N} \cup \lbrace 0 \rbrace$. 
The \emph{$\infty$-renewal shift} is the countable Markov shift $(\Sigma_{\infty}, \sigma)$ defined by the transition matrix $A_{\infty}$, that is, the shift map $\sigma$ acting on the space
\[ \Sigma_{\infty}= \lbrace (x_i)_{i \geq 0}: x_i \in S \textrm{ and } a_{x_{i} x_{i+1}}= 1
\textrm{ for each } i \geq 0 \rbrace. \]
It can be thought of as a generalisation of the renewal shift, where we replace the renewal vertex by a full-shift on a countable alphabet.
\begin{rem}
The system $(T, \Lambda \setminus \cup_{i=0}^{\infty} T^{-i}(0))$ is topologically conjugated to the system $(\Sigma_{\infty}, \sigma)$. We denote by 
 $\pi: \Sigma_{\infty} \to \Lambda \setminus \cup_{i=0}^{\infty} T^{-i}(0)$ 
the topological conjugacy.
\end{rem}

Let $N \in \mathbb{N} \cup \lbrace \infty \rbrace$.
Given a function $\phi : \Sigma_N \to \mathbb{R}$, for each $n \geq 1$ we set
\[V_n(\phi)= \sup \lbrace |\phi(x) -\phi(y)| : x,y \in \Sigma_N,   x_i = y_i \textrm{ for }
0 \leq i \leq n-1 \rbrace.\]
A function $\phi$ is said to have \emph{summable variations} if
$\sum_{n=2}^{\infty} V_n(\phi) < \infty$.
Note that the space $\Sigma_N$ is  not compact. Sarig \cite{sa1} introduced a notion of pressure in this setting. The \emph{Gurevich pressure} is defined by
\begin{equation*}
P_G(\phi) = \lim_{n \to \infty} \frac{1}{n} \log \sum_{\sigma^n x = x} \exp \Big(
\sum_{i=0}^{n-1} \phi(\sigma ^i x) \Big) 1_{C_{i_0}}(x), 
\end{equation*}
where $1_{C_{i_0}}(x)$ denotes the characteristic function of the cylinder $C_{i_0}$. 
The value of the pressure does not depend on $i_0$. Buzzi and Sarig \cite{bs} proved that if
$(\Sigma, \sigma)$ is a topologically mixing countable Markov shift, and
$\phi: \Sigma \to \mathbb{R}$ is a potential such that $\sup \phi < \infty, P_G(\phi) < \infty$ and $\sum_{n \geq 2} V_n(\phi) < \infty$. Then there exists at most one equilibrium measure for $\phi$.

\begin{prop}
If $\alpha>\alpha^*$ then there exists a unique equilibrium measure for 
$-t_{\alpha} \log |T'|$.
\end{prop}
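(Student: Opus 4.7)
The plan is to transport the problem to the symbolic model $(\Sigma_\infty,\sigma)$ via the topological conjugacy $\pi$ and apply the Buzzi--Sarig uniqueness theorem to the pulled-back potential $\tilde\phi := -t_\alpha \log|T'|\circ\pi$. Since $\alpha>\alpha^*$, the equilibrium measure $\mu_\alpha$ constructed in the previous subsection gives positive mass to $\Lambda\setminus\bigcup_{i\geq 0} T^{-i}(0)$ (indeed, $\delta_0$ is not equilibrium for $\alpha>\alpha^*$, and in fact $\mu_\alpha(\{T^{-i}(0)\})=0$ by the absence of atoms at non-fixed points), so any equilibrium measure lifts to a shift-invariant probability measure on $\Sigma_\infty$ via $\pi^{-1}$. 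Uniqueness on the symbolic side will therefore imply uniqueness on the interval.

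First I would verify that $(\Sigma_\infty,\sigma)$ is topologically mixing. By construction the ``renewal vertex block'' $\{0,2,4,\dots\}$ carries a full shift on countably many symbols (all even vertices are mutually connected), and every odd symbol eventually returns to this block along a unique path, which gives the BIP (big images and preimages) property and in particular topological mixing.

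Next I would check the three hypotheses of the Buzzi--Sarig theorem for $\tilde\phi$. For the bound $\sup\tilde\phi<\infty$: since $t_\alpha\in(t^*,\dim_H\Lambda)$ is strictly positive and $|T'|$ is bounded below by a positive constant on $\Lambda$ (the only degeneracy is $|T'(0)|=1$, which gives $\tilde\phi\to 0$ near the removed fixed point), we obtain $\sup\tilde\phi\leq 0$ or at worst a finite constant. For summable variations: the tempered distortion assumption gives
\[
V_n(\log|T'|\circ\pi)\leq \rho_{n-1},
\]
and a standard telescoping argument on cylinders for a $C^{1+\epsilon}$ map with bounded distortion yields $V_n(\tilde\phi)\leq C t_\alpha \theta^n$ for some $\theta\in(0,1)$ on the uniformly expanding part, so that $\sum_{n\geq 2} V_n(\tilde\phi)<\infty$. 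For finite Gurevich pressure: by part (2) of Theorem \ref{termo2} we have $P(-t_\alpha\log|T'|)<\infty$ on the interval, and the variational principle together with the fact that removing the countable set $\bigcup T^{-i}(0)$ does not change the pressure (it supports only the atomic measure $\delta_0$ of zero topological complexity) gives $P_G(\tilde\phi)=P(-t_\alpha\log|T'|)<\infty$.

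Once all three hypotheses are verified, Buzzi--Sarig furnishes at most one shift-invariant equilibrium measure for $\tilde\phi$ on $\Sigma_\infty$. Pushing this measure forward by $\pi$ produces a unique $T$-invariant equilibrium measure on $\Lambda\setminus\bigcup T^{-i}(0)$; combined with the observation that $\delta_0$ is ruled out for $\alpha>\alpha^*$, this establishes uniqueness on $\Lambda$. The main technical obstacle I anticipate is the verification of summable variations under the tempered distortion hypothesis alone (which a priori only gives $V_n\leq \rho_n$ with $\rho_n\to 0$, not summably); one has to use the $C^{1+\epsilon}$ regularity on each branch together with the expanding sub-system $T^m$ to upgrade tempered distortion to exponentially decaying variations on the symbolic model, which is exactly the point where the removal of the parabolic fixed point and its preimages becomes essential.
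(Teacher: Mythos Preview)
Your approach is essentially the same as the paper's: lift to the symbolic model $(\Sigma_\infty,\sigma)$, invoke Buzzi--Sarig for uniqueness there, and rule out $\delta_0$ separately. The paper's proof is terser---it simply asserts that $m\mapsto m\circ\pi^{-1}$ is a bijection between the ergodic measures in $\mathcal{M}_T\setminus\{\delta_0\}$ and $\mathcal{M}_\infty$, observes that $\mu_\alpha\circ\pi^{-1}$ is an equilibrium measure for the pulled-back potential, cites Buzzi--Sarig, and then checks $h(\delta_0)-t_\alpha\int\log|T'|\,d\delta_0 < h(\mu_\alpha)-t_\alpha\int\log|T'|\,d\mu_\alpha$ directly.

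Two remarks. First, your claim that $\Sigma_\infty$ has the BIP property is incorrect: for large odd symbols $2n-1$ the only outgoing edge is to $2n-2$, so no finite set of target symbols can serve all states. Fortunately Buzzi--Sarig only needs topological mixing, which does hold, so this slip is harmless. Second, you are right to flag summable variations as the genuine technical point; the paper does not verify the Buzzi--Sarig hypotheses explicitly, and tempered distortion alone gives only $V_n\to 0$. Your instinct to use the $C^{1+\epsilon}$ regularity together with the uniform expansion away from the removed parabolic orbit to obtain exponential decay of variations on $\Sigma_\infty$ is the correct way to close this gap.
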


\begin{proof}
Let $\mathcal{M}_{\infty}$ be the space of $\sigma-$invariant probability measures of $(\Sigma_{\infty}, \sigma)$. The function $m \to m \circ \pi^{-1}$ is a bijection between the ergodic elements of the  sets $\mathcal{M}_T \setminus \lbrace \delta_0 \rbrace$ and
$\mathcal{M}_{\infty}$. 

Note that $\mu_{\alpha}  \circ \pi^{-1}$  is an equilibrium measure for
$-t_{\alpha} \log |T'|  \circ \pi^{-1}$. Therefore, by the result of Buzzi and Sarig \cite{bs} there are no other equilibrium measures in $\mathcal{M}_T \setminus \lbrace \delta_0 \rbrace$. But since 
\[ h(\delta_0) -t_{\alpha} \int \log|T'| \ d\delta_0 <
 h(\mu_{\alpha}) -t_{\alpha} \int \log|T'| \ d\mu_{\alpha}, \]
we have that there exists a unique equilibrium measure for  $-t_{\alpha} \log |T'|$.
\end{proof}

Note that for $t > \dim_H(\Sigma)$  the pressure is non-positive. Since
$$h(\delta_0) -t \int \log|T'| \ d\delta_0 = 0,$$ we have that $P(-t \log |T'|)=0$.

\subsection{Real analyticity of the pressure}
In this subsection we prove that the pressure function $P(-t \log|T'|)$ is real analytic on the range  $t \in (t^*, \dim_H(\Lambda))$. In order to do so, we will make use of an inducing scheme and of the results of the previous subsections regarding equilibrium measures. The method of proof is similar to the one developed by Stratmann and
 Urba\`nski \cite{su}.
 
Assume that the parabolic fixed point belongs to the interval  $I_1$ and let $\tilde{I}=\cup_{n=2}^{\infty}I_n$. Consider the inducing scheme $( \tilde{I}, F,\tau)$, where
$\tau:\tilde{I} \to \mathbb{N}$ is the first return time map and $F:\tilde{I} \to \tilde{I}$ is defined by $F(x)=T^{\tau(x)}(x)$. Note that the map $F$ is piecewise monotonic and expanding ($|F'(x)|>1$). Each interval $I_n$ is divided in a countable number of intervals of monotonicity $I_n=\cup_{j=1}^{\infty}I_{n,j}$. On each of those intervals the map $F$ is surjective. 

Let $t \in (t^*, \dim_H(\Lambda))$ and $\mu_t$ be the unique equilibrium measure corresponding to $-t \log |T'|$. Recall that $\int \log |T'| \ d\mu_t >0$. Results of Bruin and
Todd \cite{bt} and Zweimuller \cite{zw} imply that there exists a unique (Gibbs) $F-$invariant measure $\tilde{\mu_t}$, such that $\int \tau \ d\tilde{\mu_t}< \infty$ which
projects onto $\mu_t$.

Let 
\[P(t,q)=P(-t \log|F'| -q \tau).\]
Since the underlying system is a full-shift on a countable alphabet, the pressure $P(t,q)$, when finite, is real analytic on each variable (see \cite{sa1,su}). Moreover, there exists Gibbs measures \cite{sa3}

Inducing schemes can be though of as suspension flows. In view of that the following result is very much expected.

\begin{lema}
If  $t \in (t^*, \dim_H(\Lambda))$ then $P(t,P(-t \log|T'|))=0$.
\end{lema}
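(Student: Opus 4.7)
The plan is to establish the classical Abramov--Kac correspondence between equilibrium data for $T$ and for the induced map $F = T^{\tau}$, then read off the identity $P(t,P(-t\log|T'|))=0$ from the variational principle. The crucial algebraic fact is that by the chain rule
\[
\log|F'(x)| = \sum_{i=0}^{\tau(x)-1} \log|T'(T^{i}x)|,
\]
so the induced version of the potential $-t\log|T'|$ is precisely $-t\log|F'|$. Hence the two-variable pressure $P(t,q)$ is naturally the pressure (on the induced full-shift) of the Abramov--Kac potential associated with $-t\log|T'|$ and the renormalisation constant $q$.

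\medskip

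\noindent\textbf{Lower bound.} Fix $t\in(t^*,\dim_H(\Lambda))$ and let $\mu_t$ be the unique equilibrium measure for $-t\log|T'|$ obtained in the preceding subsection; $\int\log|T'|\,d\mu_t>0$. Bruin--Todd and Zweim\"uller provide the $F$-invariant lift $\tilde\mu_t$ with $\int\tau\,d\tilde\mu_t<\infty$. Kac's formula gives $\int\log|F'|\,d\tilde\mu_t = \bigl(\int\log|T'|\,d\mu_t\bigr)\int\tau\,d\tilde\mu_t$, and Abramov's formula gives $h(\tilde\mu_t) = h(\mu_t)\int\tau\,d\tilde\mu_t$. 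Writing $q:=P(-t\log|T'|) = h(\mu_t) - t\int\log|T'|\,d\mu_t$ and multiplying by $\int\tau\,d\tilde\mu_t$, a direct rearrangement yields
\[
h(\tilde\mu_t) + \int\bigl(-t\log|F'| - q\tau\bigr)\,d\tilde\mu_t = 0,
\]
so by the variational principle for $(F,\tilde I)$, one gets $P(t,q)\ge 0$.

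\medskip

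\noindent\textbf{Upper bound.} For any ergodic $F$-invariant probability measure $\nu$ with $\int\tau\,d\nu<\infty$ and $\int\log|F'|\,d\nu<\infty$, define the $T$-invariant probability $\mu_\nu$ obtained by spreading $\nu$ along $T$-orbits and normalising by $\int\tau\,d\nu$. Applying Abramov and Kac in the reverse direction,
\[
h(\nu) + \int\bigl(-t\log|F'| - q\tau\bigr)d\nu
= \Bigl(\int\tau\,d\nu\Bigr)\Bigl( h(\mu_\nu) - t\!\!\int\log|T'|d\mu_\nu - q\Bigr)\le 0,
\]
since $q=P(-t\log|T'|)$ dominates the bracket by the variational principle for $T$. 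Taking the supremum over such $\nu$ yields $P(t,q)\le 0$; measures with $\int\tau\,d\nu=\infty$ contribute $-\infty$ to the pressure functional and may be discarded.

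\medskip

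\noindent\textbf{Main obstacle.} The delicate point is verifying that the Abramov--Kac dictionary applies to \emph{every} competitor in the variational principle for $F$, not merely to the lifted equilibrium measure. Handling measures $\nu$ with $\int\tau\,d\nu=\infty$ (or with $-\int(-t\log|F'|-q\tau)\,d\nu=\infty$) requires appealing to Sarig's convention that only measures for which the negative part of the potential is $\nu$-integrable enter the supremum, together with a standard truncation argument to dispose of escape of mass; once this is settled, the two bounds above combine to give the equality $P(t,P(-t\log|T'|))=0$.
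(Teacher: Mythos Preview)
Your argument is correct and the lower bound is exactly the paper's computation: lift $\mu_t$ to $\tilde\mu_t$, apply Abramov and Kac, and read off $P(t,q)\ge 0$ from the variational principle.

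For the upper bound you take a different route from the paper. The paper argues by contradiction: if $P(t,q)>0$ then the Gibbs measure for $-t\log|F'|-q\tau$ on the induced full shift (which exists by Sarig) would project to a $T$-invariant probability $\mu$ with $h(\mu)-t\int\log|T'|\,d\mu>q=P(-t\log|T'|)$, violating the variational principle for $T$. Your argument is instead a direct one: every admissible competitor $\nu$ in the $F$-variational principle projects (via Abramov--Kac) to a $T$-invariant $\mu_\nu$ whose free energy is bounded above by $q$, so each term in the supremum is $\le 0$. This is more elementary in that it does not invoke existence of a Gibbs/equilibrium measure on the induced system, only the Abramov--Kac dictionary and the $T$-variational principle; on the other hand the paper's approach exploits structure already established (full-shift Gibbs measures) and avoids having to check that the supremum really is attained over measures with $\int\tau\,d\nu<\infty$. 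Your handling of that last point is fine: since $t>t^*\ge 0$ and $q=P(-t\log|T'|)>0$ on $(t^*,\dim_H\Lambda)$, the potential $-t\log|F'|-q\tau$ has $-\phi\ge 0$, and Sarig's convention excludes measures with $\int(-\phi)\,d\nu=\infty$, which disposes of the case $\int\tau\,d\nu=\infty$ without any truncation.
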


\begin{proof}
Note that by the variational principle and by Abramov formula we have 
\begin{eqnarray*}
P(t,P(-t \log|T'|)) \geq h(\tilde{\mu_t}) -t \int \log|F'| \ d\tilde{\mu_t} -P(-t \log|T'|) \int \tau \ d \tilde{\mu_t} =\\
\int \tau \ d \tilde{\mu_t} \left( \frac{h(\tilde{\mu_t})}{\int \tau \ d \tilde{\mu_t}} -t \frac{\int \log|F'| \ d\tilde{\mu_t}}{\int \tau \ d \tilde{\mu_t}}  -P(-t \log|T'|) \right) =\\
\int \tau \ d \tilde{\mu_t} \left( h(\mu_t) -t \int \log |T'| \ d \mu_t -P(-t \log|T'|)  \right).\end{eqnarray*}
But recall that $\mu_t$ is he unique measure such that
\[  P(-t \log|T'|) = h(\mu_t) -t \int \log |T'| \ d \mu_t.\]
Therefore $P(t,P(-t \log|T'|)) \geq 0$. But note that the inequality can not be strict. Indeed, that would imply that there exists a $T-$invariant  measure $\mu$ (which is obtained as the projection of the Gibbs measure corresponding to $-t  \log|F'| -P(-t \log|T'|) \tau$)   for which 
\[  h(\mu) -t \int \log |T'| \ d \mu> P(-t \log|T'|).    \]
This contradiction with the variational principle proves the statement.
 \end{proof}

\begin{lema}
Let $t \in (t^*, \dim_H(\Lambda))$. If $P(t,q)$ is finite in a neighbourhood of $(t. P(-t \log|T'|))$ then $P(-t \log|T|)$is real analytic.
\end{lema}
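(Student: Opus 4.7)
The plan is to apply the implicit function theorem to the equation $P(t,q)=0$, solving for $q$ as a function of $t$. By the previous lemma the point $(t, P(-t\log|T'|))$ lies on this level set, and by hypothesis $P(t,q)$ is finite, hence real analytic in each variable separately on a neighbourhood of that point (Sarig's theory for countable full shifts); by Hartogs' theorem separate analyticity upgrades to joint real analyticity. If the partial derivative $\partial_q P(t, P(-t\log|T'|))$ is nonzero, the implicit function theorem produces a real analytic solution $q(t)$ of $P(t,q(t))=0$ near $t$, and uniqueness together with the previous lemma identifies $q(t)$ with $P(-t\log|T'|)$, which then inherits analyticity.

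\textbf{The partial derivative.} The key computation is the standard differentiation of Gurevich pressure with respect to a parameter. For a one-parameter family $\phi-q\psi$ on a topologically mixing countable Markov shift admitting a Gibbs state $\nu_q$, one has
\[
\frac{\partial}{\partial q} P_G(\phi-q\psi) \;=\; -\int \psi\, d\nu_q.
\]
In our setting $\phi=-t\log|F'|$, $\psi=\tau$, and at $q_0=P(-t\log|T'|)$ the equilibrium measure is precisely the induced Gibbs measure $\tilde\mu_t$ constructed in the previous subsection via the Bruin--Todd and Zweim\"uller lifting results, which in particular satisfies $\int \tau\, d\tilde\mu_t<\infty$. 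Since $\tau\geq 1$, we obtain
\[
\partial_q P(t, P(-t\log|T'|)) \;=\; -\int \tau\, d\tilde\mu_t \;\leq\; -1 \;<\; 0,
\]
and the non-degeneracy hypothesis of the implicit function theorem is verified.

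\textbf{Conclusion and main obstacle.} The implicit function theorem now yields a real analytic function $q(t)$ with $P(t,q(t))=0$ in a neighbourhood of $t$. On the other hand, the function $s\mapsto P(-s\log|T'|)$ is continuous on $(t^*,\dim_H(\Lambda))$ (being convex and finite there) and also solves $P(s, P(-s\log|T'|))=0$ by the previous lemma, with the same value at $s=t$; the local uniqueness clause of the implicit function theorem forces $P(-s\log|T'|)=q(s)$ on a neighbourhood of $t$, so $P(-s\log|T'|)$ is real analytic at $t$. The main obstacle is the justification of the differentiation formula above: one needs a spectral gap of the transfer operator $\mathcal{L}_{-t\log|F'|-q\tau}$ on an appropriate Banach space, together with integrability of $\tau$ against the eigenmeasure, to differentiate the leading eigenvalue in $q$. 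Both requirements are available in Sarig's setting provided $\tau$ has summable variations on the induced full shift, which in turn is inherited from the tempered distortion property of $T$.
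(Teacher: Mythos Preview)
Your proof is correct and follows essentially the same route as the paper: apply the implicit function theorem to $P(t,q)=0$ at the point $(t,P(-t\log|T'|))$, using the derivative formula $\partial_q P(t,q)=-\int\tau\,d\tilde\mu_t$. In fact you are more careful than the paper, which simply asserts that $P(t,q)$ is real analytic when finite and that the partial derivative ``is finite''; you supply the Hartogs argument for joint analyticity, the observation $\tau\geq 1$ giving strict negativity of the derivative, and the uniqueness step identifying the implicit solution with $P(-t\log|T'|)$.
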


\begin{proof}
When finite the function $P(t,q)$ is real analytic. Moreover, $$P(t,P(-t \log|T'|))=0.$$ Applying the implicit function theorem we have that $P(-t \log|T'|)$ is real analytic as long as the non-degenracy condition is satisfied. But indeed,
\[\frac{\partial P(t,q)}{\partial q}\Big|_{(t, P(-t\log|T'|))} = -\int \tau \ d\tilde{\mu_t}, \]
is finite. 
\end{proof} 
 
In order to prove that the pressure function $P(-t \log|T'|)$ is real analytic it only remains to be proven that $P(t,q)$ is finite in a neighbourhood of $(t,P(-t \log|T'|))$. This is indeed the case and the proof of this fact is fairly standard.
 
 \begin{lema}
Let $t>t^*$and $q>0$ then $P(t,q)$ is finite.
 \end{lema}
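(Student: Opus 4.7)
The plan is to reduce finiteness of the Gurevich pressure $P(t,q)$ to finiteness of the one-step partition function of the induced full shift. Since $F:\tilde I\to\tilde I$ is topologically conjugate to a full shift on the countable alphabet of branches $\{I_{n,j}\}$, and since $-t\log|F'|-q\tau$ has summable variations (a routine consequence of the tempered distortion of $T$), by Sarig's formula for the Gurevich pressure it is enough to show
\[Z(t,q):=\sum_{n\geq 2,\,j\geq 1}\sup_{x\in I_{n,j}}|F'(x)|^{-t}\exp(-q\tau_{n,j})<\infty.\]

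First I would re-index the branches by pairs $(n,k)$, where $n$ is the starting atom $I_n$ and $k=\tau_{n,j}$ is the common return time, so that for $x\in I_{n,(k)}$ the orbit traces $I_n\to I_1\to\cdots\to I_1\to\tilde I$ with $k-1$ intermediate steps in $I_1$. The chain rule gives $|F'(x)|=|T'(x)|\cdot|(T^{k-1})'(Tx)|$; hypothesis $(5)$ bounds the first factor below by $C^{-1}n^{\gamma}$. For the second factor, set $B_{k-1}:=I_1\cap T^{-1}I_1\cap\cdots\cap T^{-(k-2)}I_1\cap T^{-(k-1)}\tilde I$. A short induction using hypothesis $(4)$ shows $T^{k-1}(B_{k-1})=\tilde I$, so by the mean value theorem $|\tilde I|\leq\sup|(T^{k-1})'|\cdot|B_{k-1}|$, and the tempered distortion inequality applied to the cylinder(s) making up $B_{k-1}$ upgrades this to $\inf_{B_{k-1}}|(T^{k-1})'|\geq e^{-(k-1)\rho_{k-1}}|\tilde I|/|B_{k-1}|\geq e^{-(k-1)\rho_{k-1}}|\tilde I|$.

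Plugging these two bounds into $Z(t,q)$ separates the sum into an $n$-part and a $k$-part:
\[Z(t,q)\leq C^{t}|\tilde I|^{-t}\Bigl(\sum_{n\geq 2}n^{-\gamma t}\Bigr)\Bigl(\sum_{k\geq 1}e^{t(k-1)\rho_{k-1}-qk}\Bigr).\]
The $n$-sum converges as soon as $t>1/\gamma$, which is guaranteed by $t>t^*$, since the first lemma of this section already shows $t^*\geq 1/\gamma$. The $k$-sum converges for every $q>0$: because $\rho_{k-1}\to 0$, the exponent is dominated by $-qk/2$ for all sufficiently large $k$, leaving only finitely many initial terms to bound. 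The main technical nuisance I anticipate is justifying the distortion bound on $B_{k-1}$, which is itself a countable union of $k$-cylinders (indexed by which atom of $\tilde I$ is hit at time $k-1$); one circumvents this by splitting $B_{k-1}$ into those sub-cylinders and applying tempered distortion on each, the final inequality being preserved by $|B_{k-1}|\leq|I_1|\leq 1$.
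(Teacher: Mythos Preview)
Your argument is correct and follows essentially the same route as the paper: both reduce finiteness of $P(t,q)$ to convergence of the one-step partition sum for the induced full shift, and both factorise that sum into an $n$-part (handled by $t>t^*$) and a return-time part (handled by $q>0$). The paper estimates the summand via the interval lengths, writing $|F'(x_{n,j})|^{-t}\asymp |I_{n,j}|^{t}$ and then invoking the renewal structure near the parabolic point to get a polynomial bound $|I_{n,j}|\lesssim |I_n|\,j^{-\rho}$; the $j^{-t\rho}$ factor is then absorbed by $e^{-qj}$ and the $n$-sum is $\sum_n |I_n|^{t}$. Your chain-rule split $|F'(x)|=|T'(x)|\cdot|(T^{k-1})'(Tx)|$ together with hypothesis~(5) and tempered distortion gives the same factorisation more directly, without ever needing the specific polynomial rate $j^{-\rho}$; in that sense your version is slightly more elementary, and your observation that $t^*\ge 1/\gamma$ (from $P(-t\log|T'|)\ge \log\sum_n n^{-t\gamma}$) is exactly what controls the $n$-sum in both arguments.

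One small technical point: your bound $\inf_{B_{k-1}}|(T^{k-1})'|\ge e^{-(k-1)\rho_{k-1}}|\tilde I|$ does not follow from splitting $B_{k-1}$ into the sub-cylinders $I_{1^{k-1},m}$, since on each piece the mean-value estimate only gives $\inf|(T^{k-1})'|\ge e^{-(k-1)\rho_{k-1}}|I_m|/|I_{1^{k-1},m}|$, which is not uniformly bounded below in $m$. The clean fix is to apply tempered distortion one level up: on the length-$k$ cylinder $I_{n,1^{k-1}}$ the map $T^{k-1}$ is onto $I_1$, so $\inf_{I_{n,1^{k-1}}}|(T^{k-1})'|\ge e^{-(k-1)\rho_{k-1}}|I_1|/|I_{n,1^{k-1}}|\ge e^{-(k-1)\rho_{k-1}}|I_1|/|I_n|$, and then $|F'(x)|\ge c\,e^{-(k-1)\rho_{k-1}}|I_1|/|I_n|$ with $c=\inf_{I_1}|T'|>0$. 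This yields the same separated bound with $\sum_n |I_n|^{t}\asymp\sum_n n^{-\gamma t}$ in the $n$-slot, and the rest of your argument goes through unchanged.
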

 
 \begin{proof}
 Note that if
 \[\sum_{Fx=x} \exp \left( -t \log |F'(x)| -q \tau(x) \right) < \infty\]
 then $P(t,q) < \infty$ (see, for example, \cite{sa1}). Denote by $x_{n,j}$ the fixed point of
 $F$ restricted to the interval $I_{n,j}$. We have that
 \begin{eqnarray*}
 \sum_{Fx=x} \exp \left( -t \log |F'(x)| -q \tau(x) \right)  =
 \sum_{n=1}^{\infty}  \sum_{j=1}^{\infty} |F'(x_{n,j})|^{-t} \exp(-qj).  
 \end{eqnarray*}
 The distortion assumption and the renewal shift structure of the inducing scheme (see \cite{sa2}) imply that there exists a constant $C_1>0$ such that $C_1 |I_{n,j}|
\geq |F '(x_{n,j})|$. It also imply that there exists $\rho>1$ and $C_2>0$ such that $ C_2|I_{n,j}| \geq |I_n| j^{-\rho}. $ Therefore, since $t>t^*$ and $q>0$ we have
\begin{eqnarray*}
\sum_{n=1}^{\infty}  \sum_{j=1}^{\infty} |F'(x_{n,j})|^{-t} \exp(-qj) \leq 
C \sum_{n=1}^{\infty}  \sum_{j=1}^{\infty} |I_{n,j})|^{-t} \exp(-qj) \leq \\
C \sum_{n=1}^{\infty} |I_n|  \sum_{j=1}^{\infty} j^{-t \rho} \exp(-qj) \leq
C \sum_{n=1}^{\infty} |I_n| < \infty
\end{eqnarray*}
Hence, $P(t,q)$ is finite.
 \end{proof}
   
Therefore, we have proved that the function $t \to P(-t \log |T'|)$ is real analytic on
$(t^*, \dim_H \Lambda)$.


The proof of Theorem \ref{termo1} is similar to the hyperbolic setting (see \cite{ru}) and can be found in the work of Pollicott and Weiss \cite{pw}. But it can also be obtained
by the same methods applied here, the only difference is that the arguments work 
for $t > t^* $ instead of $t \in (t^*,  \dim_H \Lambda)$.

\section{Proof Theorem \ref{multi-main}, multifractal spectrum.}
In this section we prove the formula for the multifractal spectrum. 

\subsection{The lower bound for the multifractal spectrum.} \label{lower}
\begin{proof}[Proof Theorem \ref{multi-main}]
 In order to obtain the lower bound we use an approximation argument. Let us consider the map $T_n$  which is  the restriction of the map $T$ to the set $\cup_{i=1}^{n} I_i$. The results of Gelfert and Rams \cite{gr2} on multifractal analysis for maps with parabolic fixed points and finite entropy, can be applied to the map $T_n$. We obtain 
\[ L_n(\alpha) = \frac{1}{\alpha} \inf_{t \in \mathbb{R}} (P_n(-t \log |T'|) +t \alpha), \]
where $L_n(\cdot)$ denotes the multifractal spectrum of Lyapunov exponents for the map $T_n$.  Let $dom(L_n)$ denotes the domain of the multifractal spectrum $L_n$, that is, the set of points $\alpha \in \mathbb{R}$ such that the level set, $J_n(\alpha)$, determined by the Lyapunov exponent of $T_n$ is non-empty.
We have that $ dom(L_n) \subset dom(L_{n+1}) \subset dom(L)$, for every $n \in \mathbb{N}$. Note that the family of functions $\lbrace L_n \rbrace$ is monotonous and bounded above by $L$, that is, for every $\alpha \in dom(L_n)$ we have
\begin{enumerate}
\item $L_n(\alpha)  \leq L(\alpha)$
\item $L_n(\alpha) \leq L_{n+1}(\alpha)$
\end{enumerate}
In order to obtain the lower bound, it is sufficient to prove that
\[ \lim_{n \to \infty} L_n(\alpha) = L(\alpha). \]
Note that the sequence $\lbrace   L_n(\alpha) \rbrace $ has a pointwise limit.
The fact that this limit coincides with $L(\alpha)$ follows from the theory of convergence of Fenchel pairs developed by Wijsman \cite{wi1,wi2}. It was  applied to the theory of multifractal analysis in \cite{io} (these ideas were also recently used  in \cite{gr2}).  
Denote by $F$ the Fenchel transform of $P(-t \log |T)$ and by
\[F_n(\alpha):= \sup \lbrace \alpha t - P_n(-t \log |T'|) : t \in \mathbb{R} \rbrace. \]
It was proved by Wijsman that $\lbrace P_n(-t \log |T'|) \rbrace$ converges infimally to $P(-t \log |T|)$ if and only if $\lbrace F_n(\alpha) \rbrace$ converges infimally to
$F$. The notion of infimal convergence is given in equation \eqref{infimal} (see also \cite{wi1, wi2}). In general this notion of convergence does not coincide with the pointwise convergence.  The properties of the pressure, continuity and monotonicity, imply that in this case both notions coincide.

It was shown   by Sarig \cite{sa1} (see also the work of  Mauldin and Urba\'nski \cite{mu}) that the pressure can be approximated in the following way:
\[ \lim_{n \to \infty} P_n(-t \log |T'|) = P(-t \log |T'|). \]
From the properties of the pressure function we obtain that
\begin{equation} \label{infimal}
\lim_{\rho \to 0} \liminf_{n \to \infty} \Big( \inf \lbrace P_n(-t  \log |T'|) : |t -t_0| < \rho \rbrace \Big) =
P(-t_0 \log |T'|).
\end{equation}
That is, the sequence $\lbrace P_n(-t \log |T'|) \rbrace$ converges \emph{infimaly}
to $P(-t \log |T'|)$. Therefore
\[ \lim_{n \to \infty} L_n(\alpha) = L(\alpha). \]
Thus, we obtain the lower bound.
\end{proof}

\subsection{The upper bound for the multifractal spectrum.}
\begin{proof}[Proof Theorem \ref{multi-main}]

If the map $T$ is \emph{infinite Manneville Pomeau like} and
$\alpha \in dom(L)$  with $\alpha < \alpha^*$, then
$$\lim_{n \to \infty} L_n(\alpha) = \dim_H(\Lambda).$$ In particular, for these level sets, the lower bound is also an upper bound. 

Consider now $\alpha \in dom(L)$ such that
there exists a unique invariant  measure $\mu_{\alpha}$ (as it is was shown in Section \ref{proof-termo}) such that $\mu_{\alpha}(J(\alpha))=1$. From the variational principle and the description of the pressure function (Section \ref{termo}) we obtain
\[\dfrac{1}{\alpha}  \inf_{t \in \mathbb{R}} (P(-t \log |T'|) +t \alpha) = \dfrac{P(-t_{\alpha} \log |T'|) +t_{\alpha} \alpha}{\alpha} =\dfrac{h(\mu_{\alpha})}{\alpha}. \]
Moreover, results of Pollicott and Weiss \cite{pw}, imply that
\begin{equation*}
 \underline{\lim}_{r \to 0} \frac{ \log \mu_{\alpha}(B(x,r))}{\log r} \leq \frac{h(\mu_{\alpha})}{\alpha},
\end{equation*}
where $B(x,r)$ the open ball of radius $r$ centered at the point $x$. The result now follows from classical results in dimension theory (see \cite[Theorem $7.2$]{pe}).

\end{proof}

\subsection{The irregular set.}
In an hyperbolic setting it was shown by Barreira and Schmeling \cite{b.s} that
the irregular set, $J'$, has full Hausdorff dimension. Gelfert and Rams \cite{gr2} applied those results to prove that the same holds form the maps $T_n$.
Since
\[ \sup \lbrace \dim_H \Lambda_n : n \in \mathbb{N} \rbrace = \dim_H \Lambda, \]
we have that 
\[\dim_H \Lambda = \dim_H J'. \]

\section{Proof of Theorems \ref{max-infl} and \ref{ubic-infl}, inflection points.}
Let us start by computing the derivative of the multifractal spectrum,
\begin{equation} \label{derivada1}
\dfrac{d}{d \alpha} L(\alpha) = L^{'}(\alpha) =\frac{1}{\alpha ^2} \Big( \alpha \frac{d}{d \alpha} h(\mu_{\alpha}) -  h(\mu_{\alpha}) \Big). 
\end{equation}
Since $P(t_{\alpha}) = h(\mu_{\alpha}) -t_{\alpha} \alpha$ we have
\begin{equation}
\dfrac{d}{d \alpha} P(t_{\alpha}) \cdot \frac{d}{d \alpha} t_{\alpha} = 
\Big( \frac{d}{d \alpha} h(\mu_{\alpha}) \Big) - t_{\alpha} - \alpha \frac{d}{d \alpha} t_{\alpha}.
\end{equation}
Using the formula for the derivative of the pressure (see \cite{ru}) and the fact that the Lyapunov exponent with respect to $\mu_{\alpha}$ is equal to $\alpha$. We obtain that
\begin{equation} \label{derivada-t}
\frac{d}{d \alpha} h(\mu_{\alpha}) = t_{\alpha}.
\end{equation}

\begin{prop}
If for every $\alpha$ in the interior of the domain of the multifractal spectrum there exists a measure of full dimension, $\mu_{\alpha}$, then $L(\alpha)$ has a unique maximum.
\end{prop}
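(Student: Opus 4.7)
The plan is to study the critical points of $L$ via the two identities already derived in the excerpt, namely
\[ L'(\alpha) = \frac{1}{\alpha^2}\bigl(\alpha\, t_\alpha - h(\mu_\alpha)\bigr) \quad\text{and}\quad \frac{d}{d\alpha} h(\mu_\alpha) = t_\alpha, \]
and show that every interior critical point of $L$ is a strict local maximum. Since a continuous function on an interval cannot have two strict local maxima without a strict local minimum in between (which would itself be a critical point, contradicting the preceding assertion), uniqueness of the maximum follows immediately.

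First I would read off from the formula for $L'$ that $L'(\alpha)=0$ if and only if $h(\mu_\alpha) = \alpha\, t_\alpha$, i.e.\ $L(\alpha) = t_\alpha$. Next I would compute $L''(\alpha)$ by differentiating the expression above; using \eqref{derivada-t} to replace $\tfrac{d}{d\alpha} h(\mu_\alpha)$ by $t_\alpha$, one obtains after cancellation
\[ L''(\alpha) = \frac{-2}{\alpha^3}\bigl(\alpha\, t_\alpha - h(\mu_\alpha)\bigr) + \frac{t_\alpha'}{\alpha}. \]
At a critical point the first term vanishes, and so $L''(\alpha) = t_\alpha'/\alpha$.

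Now I would show that $t_\alpha' < 0$ throughout the interior of $\operatorname{dom}(L)$. Recall that, by the variational principle, the equilibrium measure $\mu_{t_\alpha}$ satisfies $\int \log|T'|\,d\mu_{t_\alpha} = \alpha$, and by the standard differentiation formula for pressure this is $-P'(t_\alpha) \cdot (-1) = $ yes, $\alpha = -\tfrac{d}{dt}P(-t\log|T'|)\big|_{t=t_\alpha}$. Differentiating the implicit relation $\alpha = -P'(t_\alpha)$ in $\alpha$ gives $t_\alpha' = -1/P''(t_\alpha)$, which is strictly negative because Theorem \ref{termo2} (resp.\ Theorem \ref{termo1}) asserts that the pressure function is strictly convex on the relevant interval. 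Since $\alpha>0$ in the interior of the domain, we conclude $L''(\alpha) < 0$ at every critical point.

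Finally I would close the argument by a standard topological observation: if $L$ had two distinct local maxima $\alpha_1 < \alpha_2$ in the interior of $\operatorname{dom}(L)$, continuity would force $L$ to attain a local minimum at some $\alpha_0 \in (\alpha_1, \alpha_2)$; by the assumed existence of $\mu_{\alpha_0}$ the previous step applies at $\alpha_0$, making $L''(\alpha_0) < 0$, contradicting the local-minimum character of $\alpha_0$. Thus $L$ has at most one critical point in the interior of its domain, which must be its unique maximum. The main subtlety I anticipate is the justification that $\alpha \mapsto t_\alpha$ is differentiable with $t_\alpha' = -1/P''(t_\alpha)$; this requires invoking real analyticity of the pressure (already proved in Theorems \ref{termo2} and \ref{termo1}) together with strict convexity to guarantee the implicit function theorem applies.
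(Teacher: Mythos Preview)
Your argument is correct but takes a genuinely different route from the paper's. The paper's proof is a one-line identification via Bowen's formula: from $P(-t_\alpha\log|T'|)/\alpha = h(\mu_\alpha)/\alpha - t_\alpha$ one sees that $L'(\alpha)=0$ (i.e.\ $t_\alpha = h(\mu_\alpha)/\alpha$) is equivalent to $P(-t_\alpha\log|T'|)=0$, which by Bowen's equation forces $t_\alpha=\dim_H(\Lambda)$; since $\alpha\mapsto t_\alpha$ is strictly monotone this pins down a single $\alpha$. You instead run a second-derivative test, showing that at any critical point $L''(\alpha)=t_\alpha'/\alpha<0$ (via $t_\alpha'=-1/P''(t_\alpha)$ and strict convexity of pressure), and then rule out multiple maxima topologically. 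Both arguments ultimately rest on strict convexity of the pressure—yours explicitly through $P''>0$, the paper's implicitly through injectivity of $\alpha\mapsto t_\alpha$. The paper's route is shorter and ties the maximum directly to the geometric quantity $\dim_H(\Lambda)$; your route gives a little more, namely the sign of $L''$ at the critical point, and avoids invoking Bowen's formula. One small point: your argument establishes \emph{uniqueness} of a critical point but does not address \emph{existence}; the paper's identification with $t_\alpha=\dim_H(\Lambda)$ at least makes existence plausible (one needs $\dim_H(\Lambda)$ to lie in the range of $\alpha\mapsto t_\alpha$), though the paper is also silent on this.
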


\begin{proof}
Note that from equation \eqref{derivada1} we have that $L'(\alpha)=0$ if and only if
\[ \frac{d}{d \alpha} h(\mu_{\alpha}) = \frac{h(\mu_{\alpha})}{\alpha}. \]
Form equation \eqref{derivada-t}, we obtain that $L(\alpha)$ has a maximum if and only if
\begin{equation} \label{t}
t_{\alpha}= \frac{h(\mu_{\alpha})}{\alpha}.
\end{equation}
Note that
\[\dfrac{P(-t_{\alpha} \log |T'|)}{\alpha} = \dfrac{h(\mu_{\alpha})}{\alpha} -t_{\alpha}. \]
By the Bowen equation the left hand side is equal to zero only for $t_{\alpha} = \dim_H(J)$ and the right hand side only at the local maxima.
\end{proof}

The second derivative of $L(\alpha)$ with respect to $\alpha$ is given by
\begin{equation} \label{derivada2}
\frac{d^2}{d \alpha^2} L(\alpha) =L''(\alpha) = \dfrac{1}{\alpha^3} \Big( \alpha ^2 \frac{d^2}{d \alpha ^2} h(\mu_{\alpha}) - 2 \alpha \frac{d}{d \alpha} h(\mu_{\alpha}) +2 \alpha h(\mu_{\alpha}) \Big).
\end{equation}

\begin{rem} \label{inflection}
The number $\alpha$ is an inflection point for $L(\alpha)$ (that is $L''(\alpha)=0$) if and only if
\begin{equation} \label{condition-inflection}
\dfrac{\alpha ^2}{2} \dfrac{d^2}{d \alpha ^2} h(\mu_{\alpha}) =
\Big( \alpha \frac{d}{d \alpha}  h(\mu_{\alpha})   \Big) - h(\mu_{\alpha}).
\end{equation}
Equivalently,
\begin{equation} \label{condition-inflection1}
\dfrac{d^2}{d \alpha ^2} h(\mu_{\alpha}) =\dfrac{2}{\alpha} \frac{d}{d \alpha}  h(\mu_{\alpha}) -\dfrac{2}{\alpha ^2} =
2 \Big( \frac{1}{\alpha} \Big(  \frac{d}{d \alpha}  h(\mu_{\alpha}) -\frac{h(\mu_{\alpha})}{\alpha} \Big) \Big) = 2 L'(\alpha).
\end{equation}
\end{rem}
We have proved Theorem \ref{max-infl}. In order to prove Theorem \ref{ubic-infl} we need the following lemma

\begin{lema}  \label{con}
The point $\alpha$ is an inflection point of the Lyapunov spectrum $L(\alpha)$ if and only if 
\begin{equation}
P(-t_{\alpha} \log |T'|) = -\frac{\alpha ^2}{2} t_{\alpha}'.
\end{equation}
\end{lema}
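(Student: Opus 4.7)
The plan is to chain together Remark~\ref{inflection}, equation~\eqref{derivada-t}, and the variational identity $P(-t_{\alpha}\log|T'|) = h(\mu_{\alpha}) - t_{\alpha}\alpha$ to rewrite the inflection condition entirely in terms of pressure and $t_{\alpha}$.

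First, by Remark~\ref{inflection}, the point $\alpha$ is an inflection point of $L$ if and only if
\[ \frac{d^2}{d\alpha^2} h(\mu_{\alpha}) = 2 L'(\alpha), \]
so I would translate both sides into the quantities $t_{\alpha}$, $t_{\alpha}'$ and $P(-t_{\alpha}\log|T'|)$.

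For the left-hand side, equation~\eqref{derivada-t} gives $\frac{d}{d\alpha}h(\mu_{\alpha}) = t_{\alpha}$, hence differentiating once more,
\[ \frac{d^2}{d\alpha^2} h(\mu_{\alpha}) = t_{\alpha}'. \]
For the right-hand side, I would combine equation~\eqref{derivada1}, which reads
\[ L'(\alpha) = \frac{1}{\alpha^2}\Bigl(\alpha \tfrac{d}{d\alpha}h(\mu_{\alpha}) - h(\mu_{\alpha})\Bigr) = \frac{1}{\alpha^2}\bigl(\alpha t_{\alpha} - h(\mu_{\alpha})\bigr), \]
with the variational identity $P(-t_{\alpha}\log|T'|) = h(\mu_{\alpha}) - t_{\alpha}\alpha$, to obtain
\[ L'(\alpha) = -\frac{P(-t_{\alpha}\log|T'|)}{\alpha^2}. \]

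Substituting both expressions into $\frac{d^2}{d\alpha^2}h(\mu_{\alpha}) = 2L'(\alpha)$ yields
\[ t_{\alpha}' = -\frac{2\, P(-t_{\alpha}\log|T'|)}{\alpha^2}, \]
which rearranges to $P(-t_{\alpha}\log|T'|) = -\frac{\alpha^2}{2}\, t_{\alpha}'$, as required, and the reverse implication is immediate from the same chain of equivalences. There is no real obstacle here: once Remark~\ref{inflection} and~\eqref{derivada-t} are in hand, the lemma is a one-line algebraic consequence of the variational principle, and the only subtlety to keep an eye on is that the derivation tacitly requires $\alpha$ to lie in the range where $t_{\alpha}$ is a differentiable function of $\alpha$, i.e. where the equilibrium measure $\mu_{\alpha}$ exists and the pressure is analytic (the regime $t \in (t^{*},\dim_H \Lambda)$ provided by Theorem~\ref{termo2}).
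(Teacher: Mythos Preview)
Your proof is correct and follows essentially the same approach as the paper: both use Remark~\ref{inflection} (equivalently equation~\eqref{condition-inflection1}), the identity $\frac{d}{d\alpha}h(\mu_{\alpha})=t_{\alpha}$ from~\eqref{derivada-t}, and the variational relation $P(-t_{\alpha}\log|T'|)=h(\mu_{\alpha})-t_{\alpha}\alpha$ to reduce the inflection condition to the stated identity. Your version is slightly more streamlined in that you compute $L'(\alpha)=-P(-t_{\alpha}\log|T'|)/\alpha^{2}$ directly, whereas the paper first isolates $h(\mu_{\alpha})/\alpha$ before invoking the variational identity, but the arguments are algebraically equivalent.
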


\begin{proof}
From equation \eqref{derivada-t} we have that
\[\frac{d}{d \alpha} h(\mu_{\alpha})  = t_{\alpha}   \textrm{ and } \frac{d^2}{d \alpha ^2} h(\mu_{\alpha})  = t_{\alpha} ' .\]
Replacing the above equations in equation \eqref{condition-inflection1} we obtain
\[ t_{\alpha}' =\frac{2}{\alpha} t_{\alpha} -\frac{2}{\alpha} \frac{h(\mu_{\alpha})}{\alpha}. \]
Multiplying by $\alpha / 2$,
\begin{equation} \label{entr-inf}
\frac{h(\mu_{\alpha})}{\alpha} = t_{\alpha} -\frac{\alpha}{2} t_{\alpha}'.
\end{equation}
Since the measure $\mu_{\alpha}$ is an equilibrium measure with Lyapunov exponent equal to $\alpha$, we have that
\begin{equation} \label{pres-inf}
\dfrac{P(-t_{\alpha} \log |t'|)}{\alpha} = \frac{h(\mu_{\alpha})}{\alpha} -t_{\alpha}.
\end{equation}
Combining equations \eqref{entr-inf} and \eqref{pres-inf} we obtain that $\alpha$ is an inflection point if and only if
\begin{equation} \label{inf-computable}
P(-t_{\alpha} \log |T'|) = -\dfrac{\alpha ^2}{2} t_{\alpha}'.
\end{equation}
\end{proof}

\begin{proof} [Proof of Theorem \ref{ubic-infl}]
Consider the function $\alpha \to P(-t_{\alpha} \log |T'|)$ with the same domain as the Lyapunov spectrum $(\alpha_1 , \infty)$ (note that it is possible for $\alpha_1=0$). This pressure function is continuous  and increasing. It is negative for $\alpha \in ( \alpha_1 , \alpha^*)$ and positive for
 $\alpha \in ( \alpha^*, \infty)$.  We have three different cases depending on the behaviour of the pressure function $ t \to P(-t \log |T'|)$.
\begin{enumerate}
\item If the map is \emph{Gauss like} then 
$$\lim_{\alpha \to \alpha_1} P(-t_{\alpha} \log |T'|)  = - \infty \textrm{ and  } \lim_{\alpha \to \alpha_2} P(-t_{\alpha} \log |T'|)  = + \infty$$ 
(in this case $\alpha_1 >0$).
\item  If the map is \emph{Renyi like} then the domain is $[0, + \infty)$. Also,
$$P(-t_{0} \log |T'|)=0 \textrm{ and } P(-t_{\alpha} \log |T'|) >0$$ for every $\alpha>0$.
Moreover,  $\lim_{\alpha \to \infty} P(-t_{\alpha} \log |T'|) = +\infty$.
\item  If the map is \emph{infinite Manneville Pomeau like} then the domain is $[0, + \infty)$. We have that $ P(-t_{\alpha} \log |T'|) =0$ in an interval of the form
$[0, \alpha^*]$ and it is positive and strictly increasing for $\alpha> \alpha^*$.
Note that $$\alpha^* = \lim_{t \to \dim_H(\Lambda)^-} P'(-t \log|T'|).$$  Moreover, $\lim_{\alpha \to \infty} P(-t_{\alpha} \log |T'|) = +\infty$.
\end{enumerate}
Let us study the behaviour of the function $\alpha \to t_{\alpha}$. Again we have three different cases. Denote by $t^*$ the critical value of the pressure (as in Theorems \ref{termo1} and \ref{termo2}).
\begin{enumerate}
\item If the map is \emph{Gauss like} then $\lim_ {\alpha \to  \alpha_1} t_{\alpha} = + \infty$ and $\lim_ {\alpha \to  \infty} t_{\alpha}= p^*$. Moreover, the function
is strictly decreasing. Therefore the function $\alpha \to t_{\alpha}'$ is negative and such that $\lim_ {\alpha \to  \alpha_1} t_{\alpha}' = - \infty$ and
$\lim_ {\alpha \to  \infty} t_{\alpha}'= 0$. Since $\alpha \to P(-t_{\alpha} \log |T'|)$
is negative for $\alpha < \alpha^*$, in virtue of Lemma  \ref{con} there are no inflection points in $(\alpha_1, \alpha^* )$.

\item  If the map is \emph{Renyi like} then the result is clear since $\alpha^*=0$. 

\item  If the map is \emph{infinite Manneville Pomeau like} then the Lyapunov spectrum is constant on the interval $[0, \alpha^*]$.  The function $t_\alpha$ is constant on the interval
 $[0, \alpha^*]$.  Therefore there are no inflection points  in $(0, \alpha^*)$.
\end{enumerate}



\end{proof}


\end{document}